\documentclass[12pt,a4paper]{amsart}
\usepackage{graphicx,amssymb}
\usepackage{amsmath,amssymb,amscd}
\input xy
\xyoption{all}
\usepackage[all]{xy}
\usepackage{tikz}

\newcommand{\lra}{\longrightarrow}

\newcommand{\cU}{\mathcal{U}}
\newcommand{\cO}{\mathcal{O}}

\newcommand{\Pic}{\mbox{Pic}}

\newcommand{\gr}{\text{gr}}

\newcommand{\im}{\operatorname{Im}}
\newcommand{\um}{\"}
\theoremstyle{plain}
\newtheorem{theorem}{Theorem}[section]
\newtheorem{lem}[theorem]{Lemma}
\newtheorem{prop}[theorem]{Proposition}
\newtheorem{cor}[theorem]{Corollary}

\newtheorem{rem}[theorem]{Remark}
\newtheorem{ex}[theorem]{Example}

\newtheorem{df}[theorem]{Definition}
\numberwithin{equation}{section}
\begin{document}
\title[Twisted BN loci]{New examples of twisted Brill-Noether loci I}

\author{L. Brambila-Paz}
\address{CIMAT\\Apdo. Postal 402\\ C.P. 36240\\ Guanajuato\\ Mexico}   
\email{lebp@cimat.mx}
\author{P. E. Newstead}
\address{Department of Mathematical Sciences\\
             University of Liverpool\\
              Peach Street, Liverpool L69 7ZL, UK}
\email{newstead@liv.ac.uk}  
 
\date{\today}

\thanks{The authors are members of the research group VBAC (Vector Bundles on Algebraic Curves).The first author acknowledges the support of CONACYT proj. 251938.}
\keywords{Vector bundles on curves, Brill-Noether theory}
\subjclass[2010]{Primary: 14H60}

\maketitle
\begin{center}{\it Dedicated to Oscar Garc\'{\i}a-Prada on the occasion of his 60th birthday}

\end{center}

\begin{abstract}
Our purpose in this paper is to construct new examples of twisted Brill-Noether loci on curves of genus $g\ge2$. Many of these examples have negative expected dimension. We deduce also the existence of a new region in the Brill-Noether map, whose points support non-empty standard Brill-Noether loci.

\end{abstract}
\tableofcontents

\section{Introduction}\label{intro}
Our object in this paper and its successor \cite{bpn} is to construct twisted Brill-Noether loci different from those constructed in \cite{hhn}. In particular, the relevance of many of these examples is that they have negative Brill-Noether numbers, whereas the examples constructed in \cite[Theorem 1.1]{hhn} all have positive Brill-Noether numbers. We deduce also the existence of a new region in the Brill-Noether map (see below for the definition), the points of which support non-empty Brill-Noether loci, whose existence was not previously known. This implies that our new examples also differ from those constructed in \cite[Section 9]{hhn} (see Remark \ref{r26}(iii)). In this paper, we consider applications of the results of \cite{bgn,bmno,m1,m3,m2,te} together with Butler's Theorem \cite[Theorem 2.1]{bu1}. In \cite{bpn}, we will, in particular, use known cases of Butler's Conjecture \cite[Conjecture 2]{bu2} (see \cite{bbn1,bbn2,bmgno}) to obtain further examples. Our methods will continue to apply as further cases of Butler's Conjecture are proved. In what follows, we shall usually abbreviate Brill-Noether to BN.

Let $C$ be a smooth irreducible projective curve of genus $g\ge2$ defined over the complex numbers. For $i=1,2$, let $M_i:=M(n_i,d_i)$ denote the moduli space of stable bundles of rank $n_i$ and degree $d_i$. Similarly, we write $\widetilde{M}_i$ for the corresponding moduli space of S-equivalence classes of semistable bundles. 

Let $E_2$ be any vector bundle on $C$ of rank $n_2$ and degree $d_2$. For any integers $n_1$, $d_1$, $k$ with $n_1\ge1$, one can define the \textit{twisted BN locus}
 \begin{equation}\label{eq4}
B(n_1,d_1,k)(E_2):=\{E_1\in M_1\,|\,h^0(E_1\otimes E_2)\ge k\}.
\end{equation}
The locus $B(n_1,d_1,k)(E_2)$ is a closed subscheme of $M_1$. In fact, it is a determinantal locus, being the pullback of a universal determinantal variety of codimension $k(k-\chi)$,
where
\begin{equation}\label{eq24}
\chi:=n_2d_1+n_1d_2-n_1n_2(g-1).
\end{equation}
It follows that, if $\beta(n_1,d_1,k)(E_2)\le n_1^2(g-1)+1$, then every irreducible component of $B(n_1,d_1,k)(E_2)$ has dimension greater than or equal to the \textit{BN number}
\begin{equation}\label{eq5}
\beta(n_1,d_1,k)(E_2):=n_1^2(g-1)+1-k(k-\chi),
\end{equation}
This number is often referred to as the \textit{expected dimension}.

We define also
\[\widetilde{B}(n_1,d_1,k)(E_2):=\{[E_1]\in \widetilde{M}_1\,|\,h^0(\operatorname{gr}E_1 \otimes E_2)\ge k\},\]
where $[E_1]$ denotes the S-equivalence class of $E_1$ and $\gr E_1$ is the corresponding graded bundle.
This is a closed subscheme of $\widetilde{M}_1$, but can have components of dimension less than the BN number.

 In the special case $E_2=\cO_C$, we obtain the standard (untwisted) higher rank BN locus, denoted by $B(n_1,d_1,k)$, with expected dimension 
\begin{equation}\label{eq44}
\beta(n_1,d_1,k):=n_1^2(g-1)+1-k(k-d_1+n_1(g-1)).
\end{equation} In particular, $B(1,d_1,k)$ is the classical BN locus $W^{k-1}_{d_1}$.

In the case $n_1=1$, twisted BN loci have been studied for some time starting with \cite{gh,laz1} (see \cite[Theorem 2.1]{hhn} for full details and further references). These loci include the special case of maximal line subbundles. A systematic study for arbitrary $n_1$ was started in \cite{hhn}, where, in particular, examples of twisted BN loci were constructed \cite[Theorem 1.1]{hhn} using the technique of \cite{m3}. As we shall see in Theorem \ref{t1}, these have the important feature that the BN number \eqref{eq5} is positive. Many of them (maybe, all) have the expected dimension \cite[Theorem 1.2]{hhn}.

We can now introduce the loci which we are going to study in this paper. This is a special case of a more general construction introduced in \cite{hhn}, where it was used primarily to obtain results for $B(n_1,d_1,k)(E_2)$. If $\gcd(n_i,d_i)=1$ for $i=1,2$, then there exist universal bundles $\cU_i$ on $M_i\times C$ and, for any integer $k$, we define the \textit{universal twisted BN locus} as follows:
\begin{equation}\label{eq1}
B^k(\cU_1,\cU_2):=\{(E_1,E_2)\in M_1\times M_2\, |\,h^0(E_1\otimes E_2)\ge k\}.
\end{equation}
In the non-coprime case, we need to lift to \'etale coverings of $M_1$ and $M_2$ in order to construct ``universal'' bundles \cite[Proposition 2.4]{nr}. However \eqref{eq1} still makes sense and we shall use the same notation. The locus $B^k(\cU_1,\cU_2)$ is a closed subscheme of $M_1\times M_2$; if $k\le0$, then $B^k(\cU_1,\cU_2)=M_1\times M_2$. The structure of $B^k(\cU_1,\cU_2)$ as a determinantal locus allows us to define a BN number (see \cite[(3.1)]{hhn}
\begin{equation}\label{eq2}
\beta^k(\cU_1,\cU_2):=\dim M_1+\dim M_2-k(k-\chi),
\end{equation}
which may again be referred to as the \textit{expected dimension}. Note that  $\chi=\chi(E_1\otimes E_2)$ for all $(E_1,E_2)\in M_1\times M_2$.  If $\beta^k(\cU_1,\cU_2)\le \dim M_1+\dim M_2$, then every irreducible component of $B^k(\cU_1,\cU_2)$ has dimension  greater than or equal to $\beta^k(\cU_1,\cU_2)$. {We define similarly the corresponding semistable locus 
\[\widetilde{B}(\cU_1,\cU_2):=\{([E_1],[E_2])\in \widetilde{M}_1\times\widetilde{M}_2\,|\,h^0(\gr E_1\otimes\gr E_2)\ge k\}.\]

Note that the definition is symmetric and $B^k(\cU_1,\cU_2)\cong B^k(\cU_2,\cU_1)$. Moreover, by Serre duality, $B^k(\cU_1,\cU_2)\cong B^{k_1}(\cU_1^*,\cU_2^*\otimes p_C^*K_C)$, where $k_1=k-\chi$ and $K_C$ denotes the canonical line bundle on $C$. Note also that $B^k(\cU_1,\cU_2)\cong B^k(\cU_1\otimes p_C^*L^*,\cU_2\otimes p_C^*L)$ for any line bundle $L$. We take account of these possibilities in Theorem \ref{t1}(ii) and Remark  \ref{r3}(iii) (see also Remark \ref{r4}(i)).

\begin{rem}\label{r16}\begin{em}The following conditions are equivalent:
\begin{itemize}
\item $B^k(\cU_1,\cU_2)\ne\emptyset$;
\item $B(n_1,d_1,k)(E_2)\ne\emptyset$ for some $E_2\in M_2$;
\item $B(n_2,d_2,k)(E_1)\ne\emptyset$ for some $E_1\in M_1$.
\end{itemize}
\end{em}\end{rem}

Many of our results are valid for any smooth curve. For some, however, we require the concept of a Petri curve. The curve $C$ is said to be \textit{Petri} if the classical Petri map
$H^0(L)\otimes H^0(K_C\otimes L^*)\lra H^0(K_C)$
is injective for every line bundle $L$ on $C$. The general curve is Petri in the sense that Petri curves of genus $g$ form a non-empty open subset of the moduli space of curves of genus $g$ \cite{g,laz}. By classical BN theory, if $C$ is Petri, the BN locus $B(1,d_1,k)$ has dimension precisely $\beta(1,d_1,k)$ whenever $0\le\beta(1,d_1,k)\le g$ and is empty when $\beta(1,d_1,k)<0$. Sometimes we will not be able to obtain results for arbitrary Petri curves but only for curves which are \textit{general} in the sense that they live in an unspecified open subset of the moduli space of curves of genus $g$.

The BN loci $B(1,d_1,k)(E_2)$, with $E_2$ general, behave in a similar way to the classical BN loci \cite[Theorem 2.1]{hhn}. By Remark \ref{r16}, this implies non-emptiness results for
$B^k(\cU_1,\cU_2)$ when $n_1=1$ or $n_2=1$. Moreover, when $n_2=1$, we have an isomorphism
\[B^k(\cU_1,\cU_2)\lra B(n_1,d_1+n_1d_2,k)\times \Pic^{d_2}(C):\ (E_1,L)\mapsto(E_1\otimes L,L),\]
giving a description of $B^k(\cU_1,\cU_2)$ in terms of an untwisted BN locus. We know many cases in which such loci are non-empty; see, in particular, Section \ref{back}. In  this paper, we shall assume that $n_i\ge2$ for $i=1,2$.

We can now state our first main theorem.

\begin{theorem}\label{t7}
Suppose that $C$ is a smooth curve of genus $g\ge2$ and that $d_i$, $k_i$ $(i=1,2)$ are positive integers. Let $B(n_i,d_i,k_i)$ be non-empty BN loci with $n_i\ge2$ and suppose that $k=k_1k_2$. 
\begin{itemize}
\item[(i)] If $d_1<2n_1$ and $d_2\le2gn_2$, then $B^k(\cU_1,\cU_2)\ne\emptyset$.
\item[(ii)] If, in addition, $d_i/n_i=\mu_i$, $k_i/n_i=\lambda_i$ and $\mu_1+\mu_2<\lambda_1\lambda_2+g-1$, then, for  fixed $\mu_i$, $\lambda_i$ and sufficiently large $n_1$, $n_2$, $\beta^k(\cU_1,\cU_2)<0$.
\end{itemize}
\end{theorem}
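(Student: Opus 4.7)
For part (i), the plan is to exhibit a pair $(E_1, E_2) \in M_1 \times M_2$ with $h^0(E_1 \otimes E_2) \ge k$. Given the non-emptiness of each $B(n_i, d_i, k_i)$, pick $E_i$ in it, assuming genericity as required. The small-slope condition $d_1 < 2n_1$, together with the results of \cite{bgn, m1, m2, m3, te, bmno}, should provide enough control on the structure of a generic $E_1$ to realise it inside an evaluation-type short exact sequence
\[
0 \longrightarrow M \longrightarrow V \otimes \mathcal{O}_C \longrightarrow E_1 \longrightarrow 0,
\]
with $V \subseteq H^0(E_1)$ of dimension $k_1$ and $M$ semistable by Butler's theorem \cite[Theorem 2.1]{bu1} (when $V$ generates $E_1$; otherwise one uses the dual form $V \otimes \mathcal{O}_C \hookrightarrow E_1$, which delivers the bound directly without any cohomological obstruction). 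Tensoring by $E_2$ and taking cohomology then yields
\[
h^0(E_1 \otimes E_2) \;\ge\; k_1 \cdot h^0(E_2) - h^0(M \otimes E_2) \;\ge\; k_1 k_2 - h^0(M \otimes E_2),
\]
so the conclusion reduces to the vanishing $h^0(M \otimes E_2) = 0$.

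I expect this vanishing to be the main obstacle. Since $M \otimes E_2$ is semistable (tensor product of semistable bundles in characteristic zero), the problem reduces to a slope control; the hypothesis $d_2 \le 2gn_2$ amounts to $\mu(E_2) \le 2g$ and should combine with Butler's slope for $M$, possibly together with sharper vanishing statements from \cite{m3, te} for $E_2$ generic in $B(n_2, d_2, k_2)$, to force $h^0(M \otimes E_2) = 0$. This will give $(E_1, E_2) \in B^k(\mathcal{U}_1, \mathcal{U}_2)$ and complete (i).

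Part (ii) is a direct asymptotic computation from \eqref{eq2} and \eqref{eq24}. Using $\dim M_i = n_i^2(g-1) + 1$, $k = n_1 n_2 \lambda_1 \lambda_2$, and
\[
\chi = n_2 d_1 + n_1 d_2 - n_1 n_2 (g-1) = n_1 n_2 \bigl(\mu_1 + \mu_2 - (g-1)\bigr),
\]
we obtain
\[
k(k - \chi) = n_1^2 n_2^2\, \lambda_1 \lambda_2 \bigl(\lambda_1 \lambda_2 + g - 1 - \mu_1 - \mu_2\bigr),
\]
and hence
\[
\beta^k(\mathcal{U}_1, \mathcal{U}_2) = (n_1^2 + n_2^2)(g-1) + 2 - n_1^2 n_2^2 \cdot C,
\]
where $C := \lambda_1 \lambda_2(\lambda_1 \lambda_2 + g - 1 - \mu_1 - \mu_2)$. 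The inequality $\mu_1 + \mu_2 < \lambda_1 \lambda_2 + g - 1$ forces $C > 0$, and $C$ is constant once the slopes $\mu_i, \lambda_i$ are fixed. The quartic negative term then dominates the quadratic positive one, giving $\beta^k(\mathcal{U}_1, \mathcal{U}_2) < 0$ for all $n_1, n_2$ sufficiently large (along values with $n_i \mu_i, n_i \lambda_i \in \mathbb{Z}$).
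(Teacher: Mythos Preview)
Your argument for (ii) is correct and matches the paper's Lemma~\ref{l10} exactly: a direct computation from \eqref{eq2} and \eqref{eq24} shows that the quartic term $n_1^2n_2^2\,\lambda_1\lambda_2(\lambda_1\lambda_2+g-1-\mu_1-\mu_2)$ dominates.

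For (i), your overall strategy --- find $E_1\in B(n_1,d_1,k_1)$ with enough structure that an evaluation-type sequence, tensored with $E_2$, yields $h^0(E_1\otimes E_2)\ge k_1k_2$ --- is exactly what the paper does (Theorem~\ref{t6}, via Proposition~\ref{p3} and Theorem~\ref{t9}). However, there is a genuine gap in your justification. Butler's theorem \cite[Theorem 2.1]{bu1} applies to bundles of slope $\ge 2g$, not to $E_1$ of slope $<2$; it cannot be invoked to prove that the kernel $M$ of the evaluation map of $E_1$ is semistable, and the phrase ``Butler's slope for $M$'' has no content in this range. What actually happens, via the structure results of \cite{bgn,m1} (recorded here as Theorem~\ref{t9}), is a three-case analysis. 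In Case~I one has a subsheaf $\cO^{k_1}\hookrightarrow E_1$, so $h^0(E_1\otimes E_2)\ge k_1h^0(E_2)$ with no vanishing needed. In Case~II, $E_1=D_E$ for a stable $E$ of slope $>2g$, and the evaluation kernel is $E^*$, which is stable of slope $<-2g$ \emph{by construction}; Butler's theorem enters only to certify that $E_1=D_E$ itself is stable, hence lies in $M_1$. Then $E^*\otimes E_2$ is semistable of negative slope since $\mu(E_2)\le 2g$, giving the required vanishing. Case~III is not a single evaluation sequence for $E_1$ at all: one passes through an auxiliary bundle $E'$ (itself a dual span) and a further quotient $E'\twoheadrightarrow E_1$, combining two sequences. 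Without this case split, the inequality $\mu(M)+\mu(E_2)<0$ that your vanishing needs does not follow from any general principle; no genericity of $E_2$ or sharper input from \cite{m3,te} will rescue it.
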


Part (i) of this theorem will be stated and proved in a more detailed version as Theorem \ref{t6}; (ii) is Lemma \ref{l10}. Using the results of Section \ref{back}, it is easy to find many cases where the hypotheses of the theorem are satisfied, thus obtaining the examples we are seeking (see Example \ref{ex2}). There exist examples even when $n_1=n_2=2$ (see Example \ref {ex1}). 

Our second main theorem (Theorem \ref{t8}) is an important application to BN theory which shows that this construction gives a new region in the BN map (for the definition, see Subsection 2.1).}

Returning to twisted BN loci, our third main theorem is of a rather different nature, in that $d_2$ is negative and so $h^0(E_2)=0$ for all $E_2\in M_2$.

\begin{theorem}\label{t5} Suppose that  $C$ is a smooth curve of genus $g\ge 2$, $n_1\ge2$, $k_1>n_1$, $n$ a positive integer and either $d>2ng$ or $d=2ng$ and $C$ is non-hyperelliptic. Suppose further that $B(n_1,d_1,k_1)\ne\emptyset$ and that 
\[k\le (d-n(g-1))(k_1-n_1)-nd_1.\] Let $(n_2,d_2)=(d-ng,-d)$. Then
$B^k(\cU_1,\cU_2)\ne\emptyset$.
If, in addition, $k=d(k_1-n_1)-e$, where $e\ge n(g-1)(k_1-n_1)+nd_1$, and
\begin{equation*}
 d_1<k_1+n_1(g-1)-\frac{g-1}{k_1-n_1},
\end{equation*}
then,  for any fixed values of $n_1$, $d_1$, $k_1$, $n$ and $e$, and $d\gg0$,
$\beta^k(\cU_1,\cU_2)<0$.
\end{theorem}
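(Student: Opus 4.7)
The plan is to construct $E_2$ explicitly as the dual--span bundle of an auxiliary rank-$n$ vector bundle on $C$. Take $F$ general in $M(n,d)$; the hypothesis $d > 2ng$ (or $d=2ng$ with $C$ non-hyperelliptic) places $\mu(F)=d/n$ above (respectively, at the boundary of, with $C$ non-hyperelliptic) the slope threshold required by Butler's Theorem \cite[Thm.~2.1]{bu1}. That theorem supplies both that $F$ is globally generated and that the kernel of evaluation
\[0 \longrightarrow M_F \longrightarrow H^0(F) \otimes \cO_C \xrightarrow{\mathrm{ev}} F \longrightarrow 0\]
is a stable bundle. Since $\mu(F)\ge 2g > 2g-2$ we also have $h^1(F)=0$, so $h^0(F)=\chi(F)=d-n(g-1)$, and hence $M_F$ has rank $d-ng = n_2$ and degree $-d = d_2$. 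Thus $E_2:=M_F$ lies in $M_2$.

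For any $E_1 \in B(n_1,d_1,k_1)$, tensoring the sequence by $E_1$ and passing to cohomology yields
\[h^0(E_1\otimes M_F) \;\ge\; h^0(F)\cdot h^0(E_1) \;-\; h^0(E_1\otimes F).\]
Since $n_1\ge2$ and $E_1$ is stable with $h^0(E_1)>0$, the standard saturation argument forces $\mu(E_1)>0$. The tensor product $E_1\otimes F$ is semistable in characteristic zero, of slope $\mu(E_1)+\mu(F)>2g-2$, so $h^1(E_1\otimes F)=0$ and $h^0(E_1\otimes F)=\chi(E_1\otimes F)=n_1 d + n d_1 - n_1 n(g-1)$. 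Substituting $h^0(F)=d-n(g-1)$ and $h^0(E_1)\ge k_1$ and using $-n_1 d + n_1 n(g-1) = -n_1\bigl(d-n(g-1)\bigr)$ gives
\[h^0(E_1\otimes M_F) \;\ge\; (k_1-n_1)\bigl(d-n(g-1)\bigr) - n d_1,\]
which is $\ge k$ by hypothesis, so $(E_1,M_F)\in B^k(\cU_1,\cU_2)$.

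For the assertion $\beta^k<0$ at $d\gg 0$, track leading $d^2$--behaviour with $n,n_1,d_1,k_1,e$ fixed. A direct substitution of $k=d(k_1-n_1)-e$ and $\chi=(d-ng)d_1-n_1 d - n_1(d-ng)(g-1)$ yields $k-\chi = d k_1 - (d-ng)\bigl(d_1 - n_1(g-1)\bigr) - e$, whose leading term is $d\bigl(k_1+n_1(g-1)-d_1\bigr)$; the hypothesis $d_1<k_1+n_1(g-1)-(g-1)/(k_1-n_1)$ makes this coefficient positive. Therefore $k(k-\chi) \sim d^2 (k_1-n_1)\bigl(k_1+n_1(g-1)-d_1\bigr)$, while $\dim M_2 = (d-ng)^2(g-1)+1 \sim d^2(g-1)$, so the leading $d^2$--coefficient of $\beta^k = \dim M_1 + \dim M_2 - k(k-\chi)$ is $(g-1)-(k_1-n_1)\bigl(k_1+n_1(g-1)-d_1\bigr)$. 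The same hypothesis rearranges to exactly this coefficient being strictly negative, hence $\beta^k\to -\infty$. The principal obstacle throughout is ensuring that $M_F$ is \emph{stable} (not merely semistable), so that $(E_1,M_F)$ lies in $B^k$ rather than only in $\widetilde{B}^k$; the sharp slope hypothesis is tailored precisely to invoke the stable conclusion of \cite[Thm.~2.1]{bu1}, after which everything reduces to Riemann--Roch, semistability of tensor products, and bookkeeping.
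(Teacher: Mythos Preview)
Your proof is correct and follows essentially the same route as the paper's. The paper packages the construction via its Proposition \ref{t2}, Corollary \ref{c1}(iii), and Theorem \ref{l1} (using the notation $D_E^*$ for your $M_F$), and for the asymptotic statement likewise reduces $\beta^k(\cU_1,\cU_2)$ to a quadratic in $d$ with leading coefficient $g-1-(k_1-n_1)(k_1+n_1(g-1)-d_1)$; your explicit justification that $\mu(E_1)>0$ (hence $h^1(E_1\otimes F)=0$) is a point the paper passes over more briefly.
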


A more detailed version of this theorem will be stated and proved as Theorem \ref{t4}. In Corollary \ref{c6} and the related Example \ref{ex5}, we show that the requirement that $B(n_1,d_1,k_1)\ne\emptyset$ is compatible with the condition $d_1<k_1+n_1(g-1)-\frac{g-1}{k_1-n_1}$. Hence Theorem \ref{t4} does lead to examples of twisted BN loci with negative expected dimension.

Section \ref{back} is concerned with the non-emptiness of $B(n_1,d_1,k_1)$. In subsection \ref{ss22}, we introduce the BN map and describe some sufficient conditions for non-emptiness.  Subsection \ref{ss25} contains a description of kernel bundles and the dual span construction. We include also a description of BN loci for bundles of slope $\le2$ (subsection \ref{ss24}). In section \ref{twisted}, we obtain an extended version of the main existence theorem of \cite{hhn}. Sections \ref{first} and \ref{kernel} contain our main results. 

The dual span construction works also when we consider pairs $(E,V)$ where $V$ is a subspace of $H^0(E)$ which generates $E$. In the case where $E$ is a line bundle, there are then further stability results, which have consequences for the non-emptiness of $B(n_1,d_1,n_1+1)$. We shall investigate applications of this to twisted BN loci in a future paper \cite{bpn}.

Throughout the paper, $C$ will denote a smooth irreducible projective curve of genus $g\ge2$ defined over ${\mathbb C}$. For a vector bundle $E$ on $C$ of rank $n$ and degree $d$, we denote by $\mu(E):=\frac{d}n$ the slope of $E$. We write also $h^0(E)$ for the dimension of the space of sections $H^0(E)$. The canonical line bundle on $C$ is denoted by $K_C$. For any real number $a$, we write $\lfloor a\rfloor$ and $\lceil a\rceil$ for the largest integer $\le a$ and the smallest integer $\ge a$, respectively.

We thank Lilia Alanis Lopez for help with the figures and the referee for some useful comments.

\section{Non-emptiness of BN loci}\label{back}
In this section, we will recall and extend some of the principal known results on the non-emptiness of the BN loci $B(n_1,d_1,k_1)$. The answer is completely known for $k_1\le n_1$, but the problem is much more complicated (and still not completely solved) for $k_1>n_1$ (except for $g\le3$ and (almost) for hyperelliptic curves) (see \cite{bmno}). For Petri curves, the na\um{\i}ve conjecture is that $B(n_1,d_1,k_1)$ is non-empty if and only if $\beta(n_1,d_1,k_1)\ge0$. From \eqref{eq44}, this is equivalent to
\begin{equation}\label{eq45}
d_1\ge k_1+n_1(g-1)-\frac{n_1(g-1)}{k_1/n_1}-\frac1{k_1}.
\end{equation}
In fact, numerous examples show that \eqref{eq45} is neither necessary nor sufficient for non-emptiness of $B(n_1,d_1,k_1)$ (see, for example,\cite{bgn}, \cite{bf}, \cite{m1}, \cite[Remark 5.5]{ln5}, \cite{bh}).

\subsection{The BN map}\label{ss22}

Following \cite{bmno}, the non-emptiness of $B(n_1,d_1,k_1)$ can be understood in terms of the \textit{BN map}. For this, we map any non-empty BN locus $B(n_1,d_1,k_1)$ (or $\widetilde{B}(n_1,d_1,k_1)$) with $0\le d_1\le n_1(2g-2)$ to the point in the plane 
\begin{equation}\label{eq54}
(\mu,\lambda)=(d_1/n_1,k_1/n_1).
\end{equation}
We shall say that $(\mu,\lambda)$ \textit{supports} $B(n_1,d_1,k_1)$ (resp., $\widetilde{B}(n_1,d_1,k_1)$) if \eqref{eq54} holds and $B(n_1,d_1,k_1)\ne\emptyset$ (resp., $\widetilde{B}(n_1,d_1,k_1)\ne\emptyset$). We are interested here in regions in the $(\mu,\lambda)$-plane for which it is known that all points with rational coordinates support infinitely many BN loci. 

The first examples of such regions were determined by Teixidor i Bigas \cite{te} and Mercat \cite{m3}.
To describe these in terms of the BN map, we follow \cite{bmno}. For $s\ge1$, write $\widehat{\eta}'(s):=s+g-2-\left\lfloor\frac{g-1}s\right\rfloor$; note that $d\ge\widehat{\eta}'(s)$ if and only if $\beta(1,d+1,s)\ge1$, and that $\widehat{\eta}'(1)=0$, $\widehat{\eta}'(g)=2g-2$. Now define a function $t_g$ for $0\le\mu\le2g-2$ as follows:
\begin{equation}\label{eq65}
t_g(\mu):=\begin{cases}0,&\mu=0;\\
\mu-\lceil\mu\rceil+s,&\mu\in]\widehat{\eta}'(s),\widehat{\eta}'(s)+1];\\
s,&\mu\in]\widehat{\eta}'(s)+1,\widehat{\eta}'(s+1)].
\end{cases}\end{equation}
We define T to be the region in the $(\mu,\lambda)$-plane given by
\begin{equation}\label{eq60}
\mu,\lambda\in{\mathbb Q},\ 0\le\mu\le2g-2,\ 0<\lambda\le t_g(\mu)
\end{equation}
(see Figure 1). Note that Clifford's Theorem can be stated in the form 
\[\lambda\le\frac{\mu}2+1\]
and the boundary line for this inequality (the \textit{Clifford line}) is shown in the figure. Moreover, the condition $\beta(n_1,d_1,k_1)=1$ becomes
\[\lambda(\lambda-\mu+g-1)=g-1\]
(see \eqref{eq44}), which defines a branch of a hyperbola with asymptotes $\lambda=0$ and $\lambda=\mu-g+1$ (the \textit{BN curve}), and is also shown in the figure.

The following lemma summarises the results of Teixidor \cite{te} and Mercat \cite{m3}. Teixidor's proof is for general $C$; for $\widetilde{B}(n_1,d_1,k_1)$, this is sufficient to prove it for any smooth curve. Mercat's result \cite[Th\'eor\`eme A-5]{m3} is valid for any smooth curve and implies our lemma. For the version stated below, see \cite[Section 5]{bmno}.
\begin{lem}\label{l2}
Let $C$ be a smooth curve of genus $g\ge2$ and suppose that $n_1\ge2$. If $(\mu,\lambda)\in \operatorname{T}$, then $(\mu,\lambda)$ supports $\widetilde{B}(n_1,d_1,k_1)$ for all values of $n_1$ for which $n_1\mu$ and $n_1\lambda$ are both integers. The same is true for $B(n_1,d_1,k_1)$, provided that the points
\begin{equation}\label{eq67}
(\mu,\lambda)=(\widehat{\eta}'(s)+1,\lambda),\ s-1<\lambda\le s,\ \widehat{\eta}'(s)+1\ne\widehat{\eta}'(s+1).
\end{equation}
are excluded.
\end{lem}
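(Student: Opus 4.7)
The plan is to recognize that the region $\operatorname{T}$ decomposes into strips indexed by the integers $s\ge1$, and to produce, for each rational point with the required integrality, an explicit semistable bundle with the required sections built from line bundles arising in classical Brill--Noether theory. On the plateau $\mu\in\,]\widehat{\eta}'(s)+1,\widehat{\eta}'(s+1)]$, $t_g$ equals $s$; on the ramp $\mu\in\,]\widehat{\eta}'(s),\widehat{\eta}'(s)+1]$ it rises linearly from $s-1$ to $s$. The defining property of $\widehat{\eta}'$, namely that $\widehat{\eta}'(s)+1$ is the smallest integer $d$ with $\beta(1,d,s)\ge1$, guarantees the existence of a line bundle $L$ of degree $\widehat{\eta}'(s)+1$ with $h^0(L)\ge s$ on a Petri curve; Mercat's Th\'eor\`eme A-5 of \cite{m3} then supplies the analogous input on any smooth curve.

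For a point $(\mu,\lambda)\in\operatorname{T}$ with $n_1\mu,n_1\lambda\in\mathbb Z$, I would construct a semistable bundle $E$ of rank $n_1$ and degree $d_1=n_1\mu$ with $h^0(E)\ge k_1=n_1\lambda$ by assembling several such $L$. The direct sum $L^{\oplus n_1}$ is polystable and realises the corner $(\mu,\lambda)=(\widehat{\eta}'(s)+1,s)$; elementary transformations by torsion sheaves supported at points sweep out the plateau strip while preserving semistability and not decreasing $h^0$; extensions and interpolations between neighbouring plateaus cover the ramp. This is precisely the strategy of \cite[Th\'eor\`eme A-5]{m3}, and it yields $\widetilde{B}(n_1,d_1,k_1)\ne\emptyset$ throughout $\operatorname{T}$, which proves the first assertion of the lemma.

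For the stable locus $B(n_1,d_1,k_1)$, one must upgrade the polystable $E$ above to a strictly stable bundle in the same Brill--Noether stratum. On a general curve this is Teixidor's degeneration construction of \cite{te}; on an arbitrary smooth curve it is achieved by a general small deformation inside the closure of the stable locus, using that $h^0$ is upper semicontinuous and that the stable locus is dense in each component of $\widetilde{M}_1$ of positive dimension. The upgrade fails exactly at the excluded points \eqref{eq67}: there the only realisations of $(\mu,\lambda)$ are forced onto polystable boundary loci with no admissible deformation to stable bundles with the prescribed number of sections. The main obstacle in the proof is this boundary bookkeeping --- verifying that the list \eqref{eq67} is both necessary (polystability is forced) and sufficient (outside it a stable representative exists). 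Once that identification is made, the lemma follows by citing the assembled constructions of \cite{te,m3} as presented in \cite[Section 5]{bmno}.
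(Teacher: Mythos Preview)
The paper does not give an independent proof of this lemma; it is stated as a summary of known results, with the entire justification being the sentence preceding the lemma: Teixidor's result holds for general $C$ (and specialisation then gives the semistable statement for arbitrary $C$), while Mercat's \cite[Th\'eor\`eme A-5]{m3} holds for any smooth curve and implies the lemma, the formulation being that of \cite[Section 5]{bmno}. Your proposal ultimately lands on exactly these citations, so in that sense it matches the paper.

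However, the intermediate sketch you offer for the stable case contains a genuine error. You write that one upgrades a polystable $E$ to a stable bundle ``by a general small deformation inside the closure of the stable locus, using that $h^0$ is upper semicontinuous''. Upper semicontinuity goes the wrong way here: it tells you that a \emph{general} nearby bundle may have \emph{fewer} sections than the special polystable one, not more. So density of the stable locus together with semicontinuity does not produce a stable bundle in $B(n_1,d_1,k_1)$ from a polystable one in $\widetilde{B}(n_1,d_1,k_1)$. Neither Teixidor nor Mercat argues this way: Teixidor constructs the stable bundle directly by degeneration to a reducible nodal curve and limit linear series, and Mercat builds it by explicit extensions. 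Relatedly, the lemma does not assert that $B(n_1,d_1,k_1)$ is empty at the excluded points \eqref{eq67}, only that the cited constructions do not cover them; your phrase ``necessary (polystability is forced)'' overstates what is being claimed. If you want to give more than a citation, you should replace the semicontinuity-deformation heuristic with a pointer to the actual mechanism in \cite{te} or \cite{m3}.
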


\begin{rem}\begin{em}\label{r21}
Lemma \ref{l2} is equivalent to the statement that $\widetilde{B}(n_1,d_1,k_1)\ne\emptyset$ if  \begin{equation}\label{eq40}
d_1\ge 
k_1+n_1(g-1)-n_1\left\lfloor\frac{g-1}{\lceil k_1/n_1\rceil}\right\rfloor.
\end{equation}
This can also be stated as
\begin{equation}\label{eq26}
\mu\ge \lambda+g-1-\left\lfloor\frac{g-1}{\left\lceil \lambda\right\rceil}\right\rfloor.
\end{equation}
For $B(n_1,d_1,k_1)$, the case
\begin{equation}\label{eq59}d_1= n_1\left(\left\lceil\frac{k_1}{n_1}\right\rceil+g-1- \left\lfloor\frac{g-1}{\lceil k_1/n_1\rceil}\right\rfloor\right)
\end{equation}
must be excluded when $\beta(1,d_1/n_1+1,\left\lceil k_1/n_1\right\rceil+1)\le0$. \eqref{eq40} and \eqref{eq59} are essentially a restatement of Teixidor's conditions \cite{te}  and relate closely to Mercat's construction \cite{m3}.
\end{em}\end{rem}

Another method of constructing non-empty BN loci on a smooth curve $C$ is described in \cite{bmno}. This starts from the results of \cite{bgn,m1,m2}, which give precise conditions for non-emptiness when $d_1\le2n_1$, and proceeds by tensoring by line bundles $L$ with $h^0(L)=s\ge1$. Serre duality extends the region further and we denote the result by BMNO. To describe BMNO in the BN map, we write
\[\hat{\eta}(s):=s+g-1-\left\lfloor\frac{g}s\right\rfloor\]
for $s\ge1$.
The condition $d\ge\hat{\eta}(s)$ is then equivalent to $\beta(1,d,s)\ge0$. On a Petri curve, this is equivalent by classical BN theory to the assertion that $B(1,d,s)\ne\emptyset$. Note that $\hat{\eta}(1)=0$ and, on a Petri curve, $\hat{\eta}(2)$ is the gonality of $C$. We now define a function $f_g(\mu)$ for $0\le\mu\le g-1$  as follows:
\begin{equation}\label{eq27}
f_g(\mu):=\begin{cases}\frac{s}g(\mu-\left\lceil\mu\right\rceil)+s,&\mu\in[\hat{\eta}(s),\hat{\eta}(s)+1];\\
\frac{s}g(\mu-\left\lceil\mu\right\rceil+1)+s,&\mu\in]\hat{\eta}(s)+1,\hat{\eta}(s+1)-1];\\
\frac{\hat{\eta}(s+1)-s}g(\mu-\left\lceil\mu\right\rceil+1)+s,&\mu\in]\hat{\eta}(s+1)-1,\hat{\eta}(s+1)[.
\end{cases}
\end{equation}
We extend the definition of $f_g$, which is modified from that in \cite{bmno} to allow $\mu=0$, to the whole interval $[0,2g-2]$ by Serre duality. The region BMNO in the $(\mu,\lambda)$-plane is then defined by the conditions 
\[0\le\mu\le2g-2,\ 0<\lambda\le f_g(\mu).\]
 For $g=10$, this region is illustrated in Figure 1, which also includes the top boundary of T, the Clifford line, the BN curve and an indication of the excluded points from Lemmas \ref{l2} and \ref{l5} and Remark \ref{r12}(ii). This is a copy of \cite[Figure 6]{bmno} with some added information.

The following lemma summarises the principal result of \cite{bmno}, taking account of the results of \cite{m2} (see \cite[Remark 4.4]{bmno}).

\begin{lem}\label{l5}Let $C$ be a smooth curve of genus $g\ge2$ and $n_1\ge2$. If $(\mu,\lambda)\in\operatorname{BMNO}$, then $(\mu,\lambda)$ supports $\widetilde{B}(n_1,d_1,k_1)$ for infinitely many values of $n_1$. If $C$ is non-hyperelliptic, the same holds for $B(n_1,d_1,k_1)$, provided that points of the form $(\mu,\lambda)$ such that $\mu=\hat{\eta}(s)$ with $\mu\le g-1$ and $\lambda>\frac{s-1}g+s-1$, together with their Serre duals and the points $((\widehat{\eta}(s)+1,s)$, are excluded.
\end{lem}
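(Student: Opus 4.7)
The proof I would write follows the construction given in \cite{bmno}, together with the refinement from \cite{m2} pointed to by \cite[Remark 4.4]{bmno}. The base case consists of the precise non-emptiness results from \cite{bgn}, \cite{m1}, \cite{m2} for bundles with slope $\mu\le 2$, to be recalled in subsection \ref{ss24}; these populate the region BMNO near the $\lambda$-axis, i.e.\ the part of BMNO with small $\mu$.

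To extend to the rest of BMNO, I would use tensoring by line bundles: given a (semi)stable bundle $E$ already known to lie in a non-empty $\widetilde{B}$ locus together with a line bundle $L$ of degree $d\ge\hat\eta(s)$ (equivalently, satisfying $\beta(1,d,s)\ge 0$, so that a line bundle with $h^0(L)\ge s$ exists), the tensor product $E\otimes L$ is (semi)stable of the appropriate rank and degree, and the multiplication map $H^0(E)\otimes H^0(L)\to H^0(E\otimes L)$ together with classical Brill-Noether theory supplies a lower bound on $h^0(E\otimes L)$. Varying $s$ and interpolating the resulting lattice of available $(\mu,\lambda)$-values produces the piecewise-linear upper envelope $f_g$ of \eqref{eq27}: the three cases in the definition of $f_g$ reflect, respectively, tensoring by an $L$ realising the minimum degree $\hat\eta(s)$, increasing $\deg L$ within the gap to $\hat\eta(s+1)$, and the transition window to the next value of $s$. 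The extension to slopes $\mu\ge g-1$ then follows immediately from Serre duality, since $E\mapsto E^*\otimes K_C$ identifies $B(n_1,d_1,k_1)$ with $B(n_1,n_1(2g-2)-d_1,k_1+n_1(g-1)-d_1)$ and thus reflects BMNO across $\mu=g-1$.

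The principal obstacle lies in the stable statement. Tensoring typically produces bundles that are only semistable when the parameters sit at a boundary value, and promoting from semistable to stable fails precisely at the excluded points: the points $(\hat\eta(s),\lambda)$ with $\lambda>(s-1)/g+s-1$, their Serre duals, and the points $(\hat\eta(s)+1,s)$. These are exactly the locations where the line bundle $L$ realising $\hat\eta(s)$ is so rigid that every construction of rank $n_1$ from the available ingredients resolves into a polystable direct sum rather than a genuinely stable bundle. The exclusion of hyperelliptic curves in the stable conclusion reflects the same phenomenon: on a hyperelliptic curve, the line bundles with many sections are essentially powers of the hyperelliptic $g^1_2$, and this rigidity forces the higher-rank bundles produced by the tensoring step to be strictly semistable throughout a substantial portion of the region, leaving only the S-equivalence statement intact.
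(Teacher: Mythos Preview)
The paper does not give a proof of this lemma; it simply states it as a summary of the principal result of \cite{bmno}, refined by \cite{m2} via \cite[Remark 4.4]{bmno}. Your proposal cites exactly the same sources and sketches the underlying construction, so you are aligned with the paper's approach.

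One point of caution in your sketch: the bound $h^0(E\otimes L)\ge h^0(E)\cdot h^0(L)$ does not follow from the multiplication map $H^0(E)\otimes H^0(L)\to H^0(E\otimes L)$ alone, since that map is not injective in general. The argument in \cite{bmno} (mirrored in the proof of Theorem~\ref{t6} here) relies on the specific structure of the small-slope bundle $E_1$ furnished by Theorem~\ref{t9}: either $\cO^{k_1}\hookrightarrow E_1$, or $E_1$ is a dual span bundle, or a combination, and it is these exact sequences tensored by $L$ that yield the section count. Your phrasing should reflect that structural input rather than an appeal to the multiplication map per se.
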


\begin{center}
\includegraphics[width=13.0cm]{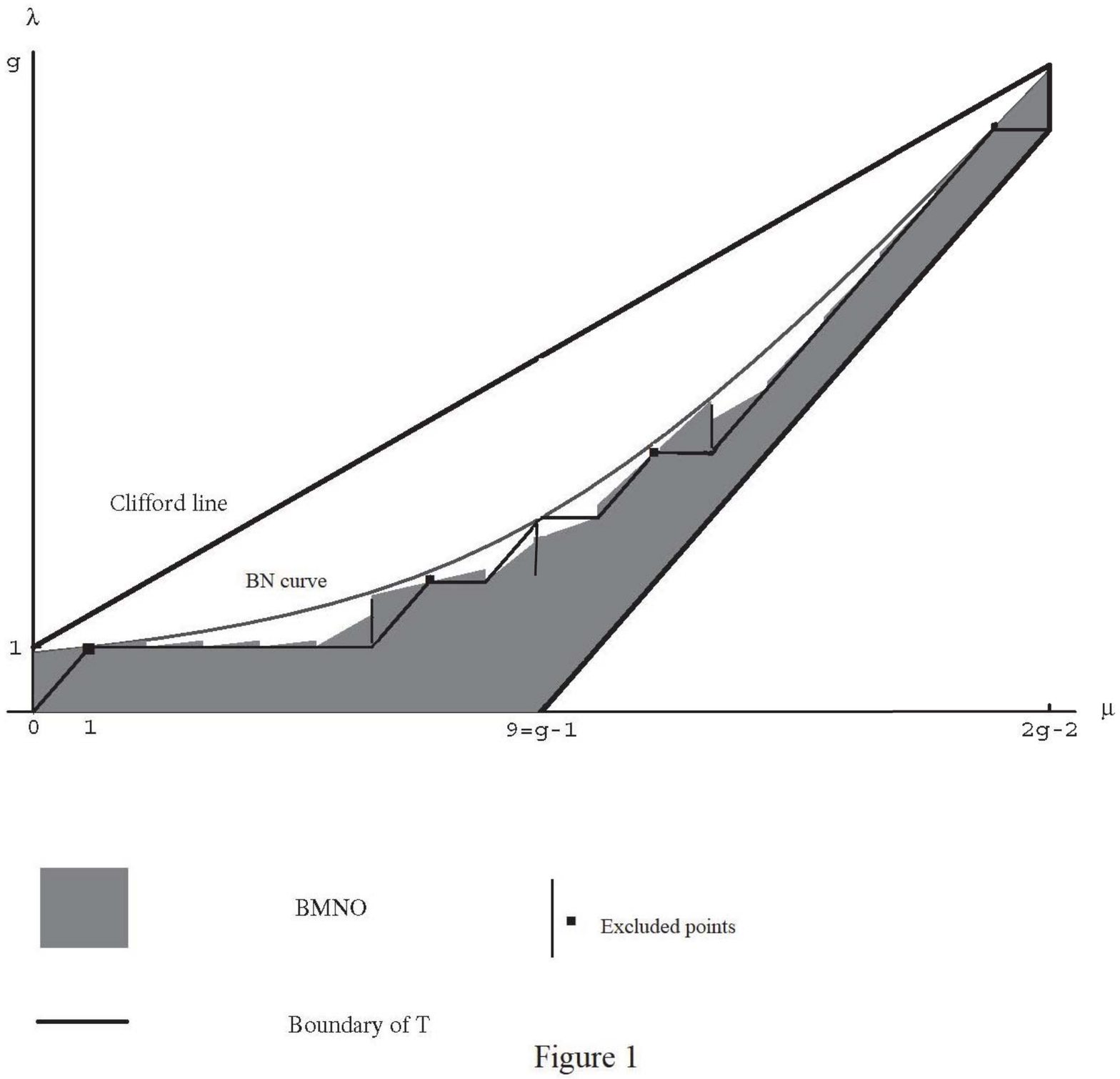}
\end{center}

\begin{rem}\label{r12}\begin{em}
(i) \cite[Remarks 4.2 and 4.3]{bmno} provide information about which values of $n_1$ are included in the above result for any given $(\mu,\lambda)$. In particular, there is a dense set of points in BMNO for which all values of $n_1$ with $n_1\mu, n_1\lambda\in{\mathbb Z}$ are allowed.

(ii) Some points excluded in Lemma \ref{l2} are included in Lemma \ref{l5}. In fact, if $g$ is a multiple of $s$, then $\widehat{\eta}'(s)=\widehat{\eta}(s)$, and  the points $(\widehat{\eta}(s)+1,\lambda)$ for $s-1<\lambda<s$ are excluded by \eqref{eq67}, but included by Lemma \ref{l5}.

(iii) For $g\ge4$, there are many non-empty $B(n_1,d_1,k_1)$ corresponding to points outside the union of T and BMNO; see, for example, the figures in \cite{ln1,ln2,ln5}. We shall construct later a new region of the BN map, giving new points for all $g\ge5$ (see Theorem \ref{t8}).

(iv)\cite{bmno} also contains comprehensive results for hyperelliptic curves.
\end{em}\end{rem}

\subsection{Kernel and dual span bundles}\label{ss25}
Let $E$ be a generated bundle of rank $n$ and degree $d$. Consider the exact sequence 
\begin{equation}\label{eq6}
0\lra D_E^*\lra H^0(E)\otimes\cO\lra E\lra0.
\end{equation}
The bundle $D_E^*$ may be referred to as a \textit{kernel bundle} (or \textit{syzygy bundle}) and has rank $h^0(E)-ng$ and degree $-d$. Dualising \eqref{eq6}, we have
\begin{equation}\label{eq58}
0\lra E^*\lra H^0(E)^*\otimes\cO\lra D_E\lra0.
\end{equation}
Following \cite{bu2}, we call $D_E$ a \textit{dual span bundle}.

The following theorem addresses the stability of $D_E$ and summarises results of Butler and Mercat (see also \cite[Theorem 3.11]{bmgno}). We write
 \begin{equation}\label{eq43}
 S:=\{E\in M(n,d)|D_E\text{ is stable}\}.
 \end{equation}

\begin{theorem}\label{l1} Let $C$ be a smooth curve of genus $g\ge2$. Suppose that either $d>2ng$ and $E\in M(n,d)$ or $d=2ng$, $C$ is non-hyperelliptic and $E$ is a general element of $M(n,d)$. Then $E$ is generated and $D_E$ is stable of slope $\le2$. In particular, $S$ is a non-empty open subset of $M(n,d)$ and $S=M(n,d)$ when $d>2ng$. Moreover, the morphism 
\begin{equation}\label{eq21}
S\lra B(d-ng,d,d-n(g-1)): E\mapsto D_E
\end{equation}
 is an isomorphism.
\end{theorem}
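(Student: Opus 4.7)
The plan is to deduce the theorem from Butler's stability theorem \cite[Theorem 2.1]{bu1}, supplemented by Mercat's refinement covering the boundary case $d=2ng$ on a non-hyperelliptic curve.

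First I would record the numerical setup. Under the hypothesis $d\ge 2ng$ we have $\mu(E)\ge 2g$, so $h^1(E)=0$ and $h^0(E)=d-n(g-1)$ by Riemann-Roch. Generation of $E$ is automatic when $d>2ng$ (it is part of Butler's statement) and holds for a general $E$ when $d=2ng$ on a non-hyperelliptic curve by Mercat. Reading off \eqref{eq6} then shows $D_E$ has rank $d-ng$ and degree $d$, whence $\mu(D_E)=d/(d-ng)\le 2$, with equality exactly when $d=2ng$.

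Next I would obtain stability of $D_E$ by invoking Butler's theorem directly: it gives stability of $D_E$ for every $E\in M(n,d)$ when $d>2ng$, and for a general $E$ when $d=2ng$ on non-hyperelliptic $C$ via Mercat's extension. Openness of $S$ then follows from openness of stability in flat families, once one notes that the assignment $E\mapsto D_E$ varies in families wherever $h^0$ is locally constant and $E$ is generated; combined with Butler's result this yields $S=M(n,d)$ when $d>2ng$. To see that the morphism \eqref{eq21} lands in the stated BN locus, I would dualise \eqref{eq6} and take cohomology of $0\to E^*\to H^0(E)^*\otimes\cO_C\to D_E\to 0$; since $\mu(E^*)<0$ gives $h^0(E^*)=0$, this yields $h^0(D_E)\ge h^0(E)=d-n(g-1)$, as required.

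The heart of the statement is the construction of an inverse. Given $F\in B(d-ng,d,d-n(g-1))$, I would form the evaluation sequence and consider $K:=\Ker(H^0(F)\otimes\cO_C\to F)$; provided $F$ is generated and $h^0(F)=d-n(g-1)$ exactly, $K$ is a vector bundle of rank $n$ and degree $-d$, so $K^*$ recovers a preimage of $F$ in $M(n,d)$. I expect the main obstacle to be these two auxiliary claims for $F$: global generation, and the collapse of $h^0(F)\ge d-n(g-1)$ to an equality. Both should follow from the stability of $F$ together with the slope-$\le 2$ machinery recalled in subsection \ref{ss24}: the Clifford-type bound for stable bundles of slope $\le 2$ forces the equality, while a base locus or torsion in the cokernel of evaluation would produce a destabilising subsheaf of $F$. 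Once these are settled, a direct comparison of the two defining sequences identifies $D_{D_E}\cong E$ and $D_{D_F}\cong F$, so \eqref{eq21} is bijective on points; the same construction carried out universally over $S$ and over $B(d-ng,d,d-n(g-1))$ shows that both the morphism and its inverse are regular, so \eqref{eq21} is an isomorphism of schemes.
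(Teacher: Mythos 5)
The forward direction of your argument --- $E$ generated with $h^1(E)=0$ from the slope bound, the count $h^0(E)=d-n(g-1)$, $\mu(D_E)\le 2$, stability of $D_E$ via Butler plus Mercat at $d=2ng$, and $h^0(D_E)\ge d-n(g-1)$ from dualising \eqref{eq6} --- is essentially the paper's proof. One small slip: at $d=2ng$ generation of $E$ holds for \emph{every} $E\in M(n,d)$ (its slope is $2g>2g-1$), not just a general one; it is the stability of $D_E$, not generation, that needs non-hyperellipticity and generality there.

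Where you diverge is the isomorphism \eqref{eq21}: the paper simply cites \cite[Th\'eor\`eme 2-B-1]{m1} and \cite[Proposition 2]{m2}, whereas you sketch a construction of the inverse $F\mapsto K^*$ with $K=\Ker(H^0(F)\otimes\cO_C\to F)$. The outline is the right one, but the justifications you offer for the auxiliary claims do not hold as stated. The equality $h^0(F)=d-n(g-1)$ does not follow from a Clifford bound: Clifford gives $h^0(F)\le(d-ng)+\tfrac{d}{2}$, which for $d\ge 2ng$ is far weaker than the target $d-n(g-1)=(d-ng)+n$. What forces equality is the BGN/Mercat inequality $h^0(F)\le \rk F+\tfrac{1}{g}(\deg F-\rk F)$ for stable bundles of slope $\le 2$ (see \cite{bgn,m1}), which here evaluates precisely to $d-n(g-1)$. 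Likewise, the assertion that a base point of $F$ "would produce a destabilising subsheaf of $F$" is not an obvious implication; generation of $F$, and the fact that $K$ is then locally free of rank $n$, is itself part of the content of Mercat's theorem. Finally, you never address stability of $K^*$, i.e. that the proposed inverse actually lands in $M(n,d)$ (and, for the map into $S$, that it recovers an element of $S$ at all); that, too, is exactly what \cite[Th\'eor\`eme 2-B-1]{m1} and \cite[Proposition 2]{m2} deliver. So either cite those results, as the paper does, or supply genuine arguments for each of these points --- the Clifford-plus-base-locus shortcut leaves real gaps.
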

\begin{proof} Any stable bundle of slope $>2g-1$ is generated with $h^1(E)=0$. Hence $h^0(E)=d-n(g-1)$. It follows from \eqref{eq58} that $h^0(D_E)\ge d-n(g-1)$. Moreover, under the given hypotheses, $D_E$ is stable by \cite[Theorem 2.1]{bu1}. It is easy to check that $\mu(D_E)\le2$.
The fact that \eqref{eq21} is an isomorphism follows from \cite[Th\'eor\`eme 2-B-1]{m1} and \cite[Proposition 2]{m2}.
\end{proof}

\subsection{Small slope}\label{ss24}
When $d_1<2n_1$, Lemma \ref{l5} includes a complete answer to the question of the non-emptiness of $B(n_1,d_1,k_1)$. The references \cite{bgn,m1} (see also \cite{top3}) contain much further information about these BN loci, in particular that there exists $E_1\in B(n_1,d_1,k_1)$, taking one of a number of special forms.

\begin{theorem}\label{t9} Let $C$ be a curve of genus $g\ge2$. If $d_1<2n_1$ and $B(n_1,d_1,k_1)\ne\emptyset$, then there exists $E_1\in B(n_1,d_1,k_1)$ taking one of the following forms.
\begin{itemize}
\item[I]  $d_1<n_1+g$. $E_1$ fits into an exact sequence 
\begin{equation}\label{eq73}
0\lra\cO^{k_1}\lra E_1\lra F\lra0
\end{equation}
for some sheaf $F$.
\item[II] $d_1=n_1+g\ell<2n_1$, $\ell\ge1$. $E_1$ is the dual span of a stable bundle $E$ of slope $>2g$ and is therefore given by an exact sequence
\begin{equation}\label{eq55}
0\lra E^*\lra  H^0(E)^*\otimes\cO\lra E_1\lra0.
\end{equation}
Moreover, $h^0(E)=h^0(E_1)= n_1+\ell$.
\item[III] $d_1=n_1+g\ell+\ell'<2n_1$ with $\ell>0$ and $0<\ell'<g$. There exist exact sequences
\begin{equation}\label{eq56}
0\lra D_{E'}^*\lra H^0(E')\otimes\cO\lra E'\lra0
\end{equation}
with $E'$ stable of rank $n_1+\ell'$, degree $d_1$ and $h^0(E')=n_1+\ell+\ell'$, and
\begin{equation}\label{eq57}
0\lra \cO^{\ell'}\lra E'\lra E_1\lra 0.
\end{equation}
\end{itemize}
\end{theorem}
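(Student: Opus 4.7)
The plan is to deduce the theorem from the structural results of \cite{bgn,m1,m2}, splitting on the position of $d_1-n_1$ with respect to multiples of $g$. Non-emptiness of $B(n_1,d_1,k_1)$ in the range $d_1<2n_1$ is already recorded in Lemma \ref{l5}; what remains is to exhibit, in each of the three sub-ranges, at least one element of the claimed shape.

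Case I ($d_1<n_1+g$) is essentially immediate: given any $E_1\in B(n_1,d_1,k_1)$, the evaluation of a $k_1$-dimensional subspace $V\subset H^0(E_1)$ produces an injection $V\otimes\cO\hookrightarrow E_1$ as a subsheaf, yielding the exact sequence \eqref{eq73}. The bound $d_1<n_1+g$ is used to guarantee non-emptiness via BGN, but the structural statement itself is tautological.

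For Case II ($d_1=n_1+g\ell$ with $\ell\ge 1$), set $n:=\ell$ and $d:=d_1$. The hypothesis $d_1<2n_1$ rewrites as $n_1>g\ell$, equivalently $d>2gn$, so Theorem \ref{l1} applies and identifies $B(n_1,d_1,n_1+\ell)$ with $M(\ell,d_1)$ via the dual span. Picking any $E\in M(\ell,d_1)$ and setting $E_1:=D_E$ yields the sequence \eqref{eq55} with $h^0(E_1)=h^0(E)=n_1+\ell$; the BGN bound $k_1\le n_1+\ell$, forced by the assumed non-emptiness of $B(n_1,d_1,k_1)$, then places $E_1$ in $B(n_1,d_1,k_1)$.

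For Case III ($d_1=n_1+g\ell+\ell'$ with $\ell\ge 1$ and $0<\ell'<g$), I would combine the previous two ideas. Apply Theorem \ref{l1} with rank $\ell$ and degree $d_1$---the slope $d_1/\ell>2g$ again following from $d_1<2n_1$, since $g\ell<n_1-\ell'<n_1+\ell'$---to obtain a stable $E':=D_E$ of rank $n_1+\ell'$, degree $d_1$, and $h^0(E')=n_1+\ell+\ell'$, so that $E'$ automatically fits into its evaluation sequence \eqref{eq56}. Next choose a generic $\ell'$-dimensional subspace $V\subset H^0(E')$ and let $E_1$ be the cokernel of $V\otimes\cO\hookrightarrow E'$: this gives \eqref{eq57}, with $E_1$ locally free of rank $n_1$, degree $d_1$, and $h^0(E_1)\ge h^0(E')-\ell'=n_1+\ell\ge k_1$. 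The main obstacle I anticipate is establishing stability of $E_1$ for a generic $V$: stability is not preserved under arbitrary quotients, so one must exploit the specific dual-span structure of $E'$. The cleanest route is to invoke \cite[Proposition 2]{m2}, which handles exactly this descent of stability; an ad hoc argument in the relevant Quot scheme of trivial sub-bundles of $E'$ would be a fallback if a direct citation is undesirable.
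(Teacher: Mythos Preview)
Your overall architecture matches the paper's, but Case I is \emph{not} tautological, and your argument there has a genuine gap. The evaluation map $V\otimes\cO\to E_1$ attached to a subspace $V\subset H^0(E_1)$ is not automatically injective as a morphism of sheaves: injectivity at the generic point requires that the sections in $V$ be linearly independent over the function field $K(C)$, which is strictly stronger than linear independence over $\CC$. For instance, if several sections all factor through a fixed line subbundle of $E_1$, they are $K(C)$-dependent and the evaluation map has nonzero kernel. The paper handles this by invoking \cite{bgn} (for $d_1\le n_1$) and \cite[Lemme~3-A-2]{m1} (for $n_1<d_1<n_1+g$), which prove precisely that for a \emph{stable} $E_1$ in this slope range the full space of sections is carried by a trivial subsheaf $\cO^{k_1}$. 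That is the nontrivial input, and it genuinely uses the bound $d_1<n_1+g$ together with stability; your remark that the bound is ``used to guarantee non-emptiness'' but that the structural statement is free misreads the situation.

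Cases II and III are essentially the paper's argument. Two small corrections: in Case III the relevant stability result for the quotient $E_1=E'/\cO^{\ell'}$ is \cite[Th\'eor\`eme~3-B-1]{m1}, not \cite[Proposition~2]{m2} (the latter concerns slope exactly~$2$, whereas here $\mu(E')<2$); and you should note explicitly, as the paper does, that the hypothesis $B(n_1,d_1,k_1)\ne\emptyset$ forces $d_1\ge n_1+g(k_1-n_1)$, hence $\ell\ge k_1-n_1$, so that $h^0(E_1)\ge n_1+\ell\ge k_1$.
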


\begin{proof}
I. By \cite{bgn} (for $d_1\le n_1$) and \cite[Lemme 3-A-2]{m1} (for $n_1<d_1<n_1+g$), the sections of $E_1$ are all carried by a subsheaf $\cO^{k_1}$. This gives \eqref{eq73}

II. We know that $B(n_1,d_1,k_1)\ne\emptyset$ if and only if $d_1\ge n_1+g(k_1-n_1)$.  We can therefore choose $E_1\in B(n_1,d_1,n_1+\ell)$ and then $h^0(E_1)=n_1+\ell$. By \cite[Th\'eor\`eme 2-B-1]{m1}, $E_1=D_E$ for some stable bundle $E$ of rank $\ell$ and degree $d_1$, hence of slope $>2g$. So $h^0(E)=n_1+\ell$ and we have the exact sequence \eqref{eq55}.

III. By \cite[Th\'eor\`eme 2-B-1]{m1} (see Case II), there exists a stable bundle $E'$ of rank $n_1+\ell'$ and degree $d_1$ with $h^0(E')=n_1+\ell+\ell'$, fitting in an exact sequence \ref{eq56}. Now, as in the proof of \cite[Th\'eor\`eme 3-B-1]{m1}, we have an exact sequence \eqref{eq57}, with $E_1\in B(n_1,d_1,n_1+\ell)$.
Since $B(n_1,d_1,k_1)\ne\emptyset$, we have (see \cite{m1})
\[n_1+g\ell+\ell'=d_1\ge n_1+g(k_1-n_1),\]
 hence $\ell\ge k_1-n_1$ and $E_1\in B(n_1,d_1,k_1)$. 
\end{proof}

\begin{rem}\begin{em}\label{r23} Theorem \ref{t9} can be extended to the case $d_1=2n_1$ (see \cite{m2}). We have two cases.

\begin{itemize}
\item[IV] $C$ non-hyperelliptic. By the results of \cite{m2}, we are in one of Cases I-III, except that in Case II, we now have $\deg E=2n_1g$. There is one further case, namely when $E_1=D_{K_C}\in B(g-1,2g-2,g)$.
\item[V] $C$ hyperelliptic. In this case $B(n_1,2n_1,k_1)\ne\emptyset$ if and only if $k_1\le n_1$ (see \cite{m2}). When this holds, there exists $E_1\in B(n_1,2n_1,k_1)$ which admits  $\cO^{k_1}$ as a subsheaf.
\end{itemize}\end{em}\end{rem}

\section{A non-emptiness result for twisted BN loci}\label{twisted}
In this section we give an extended version of \cite[Theorem 1.1]{hhn}, which we need for comparison purposes.

\begin{theorem}\label{t1}
Let $C$ be a smooth curve of genus $g\ge2$ and $E_2$ any vector bundle of rank $n_2$ and degree $d_2$ on $C$. Let $d_0$ and $k_0$ be integers satisfying $\beta(1,d_0,k_0)(E_2)\ge1$, and suppose that $n_1\ge2$. Then,
\begin{itemize}
\item[(i)] if $k\le n_1k_0$  and $d_1\ge n_1d_0+1\  (\text{resp.}, d_1\ge n_1d_0)$, the twisted BN locus $B(n_1,d_1,k)(E_2)  \ (\text{resp.}, \widetilde{B}(n_1,d_1,k)(E_2))$ is non-empty and $\beta(n_1,d_1,k)(E_2)>(\text{resp.}, \ge)\,1$;
\item[(ii)] if $k_1:=k-n_2d_1+n_1d_2-n_1n_2(g-1)\le n_1k_0$ and $-d_1\ge n_1d_0+1\ (\text{resp.}, -d_1\ge n_1d_0)$, the twisted BN locus $B(n_1,d_1,k)(K_C\otimes E_2^*)\ (\text{resp.}, \widetilde{B}(n_1,d_1,k)(K_C\otimes E_2^*))$ is non-empty and 
\[\beta(n_1,d_1,k)(K_C\otimes E_2^*)>(\text{resp.}, \ge)\,1.\]
\end{itemize}
\end{theorem}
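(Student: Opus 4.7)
The plan is to reduce to the line-bundle case and then bootstrap by a polystable construction followed by an elementary transformation. By hypothesis $\beta(1,d_0,k_0)(E_2)\ge1$, together with standard non-emptiness results for twisted line-bundle BN loci (\cite[Theorem 2.1]{hhn} and the underlying Ghione--Lazarsfeld circle of ideas), there exists a line bundle $L\in\Pic^{d_0}(C)$ with $h^0(L\otimes E_2)\ge k_0$. The polystable bundle $L^{\oplus n_1}$ then has rank $n_1$, degree $n_1d_0$, and
\[
h^0(L^{\oplus n_1}\otimes E_2)=n_1h^0(L\otimes E_2)\ge n_1k_0\ge k.
\]
Since $\gr(L^{\oplus n_1})=L^{\oplus n_1}$, this already places $[L^{\oplus n_1}]$ in $\widetilde B(n_1,n_1d_0,k)(E_2)$, handling the $\widetilde B$-version at $d_1=n_1d_0$.

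To reach higher degrees with a \emph{stable} bundle, I would take a general elementary transformation
\[
0\lra L^{\oplus n_1}\lra E_1\lra T\lra 0,
\]
where $T$ is a torsion sheaf of length $d_1-n_1d_0\ge 1$ supported on a general effective divisor of that degree. Tensoring with the locally free sheaf $E_2$ preserves left-exactness, so
\[
h^0(E_1\otimes E_2)\ge h^0(L^{\oplus n_1}\otimes E_2)\ge k.
\]
Once $E_1$ is shown to be stable, it lies in $B(n_1,d_1,k)(E_2)$, which is therefore non-empty; this settles the stable case of (i), and since stability implies semistability it also completes the $\widetilde B$-case for $d_1>n_1d_0$.

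For the BN-number inequality I would exploit the hypothesis $\beta(1,d_0,k_0)(E_2)\ge1$, which translates to $k_0(k_0-\chi_0)\le g-1$ with $\chi_0=n_2d_0+d_2-n_2(g-1)$. The assumption $d_1\ge n_1d_0+1$ forces $\chi\ge n_1\chi_0+n_2$, and combined with $k\le n_1k_0$ a short case analysis on the sign of $k-\chi$ yields $k(k-\chi)<n_1^2(g-1)$, i.e.\ $\beta(n_1,d_1,k)(E_2)>1$. The variant for $\widetilde B$ is identical after weakening $d_1\ge n_1d_0+1$ to $d_1\ge n_1d_0$ (replacing $+n_2$ by $0$) and strict inequalities by non-strict ones. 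Part (ii) then follows from (i) by Serre duality: the involution $E_1\mapsto E_1^*$ identifies $B(n_1,d_1,k)(K_C\otimes E_2^*)$ with $B(n_1,-d_1,k_1)(E_2)$ for the $k_1$ of the statement, and a direct Riemann--Roch check shows that the two BN numbers coincide, so (i) applied to the parameters $(-d_1,k_1)$ delivers the conclusion.

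The delicate step is the stability of the generic elementary transformation: $L^{\oplus n_1}$ is strictly semistable, so one has to argue that twisting up by generic torsion breaks every destabilising subsheaf, taking into account that subsheaves of $L^{\oplus n_1}$ can have slope $d_0$. This is essentially the Mercat/Teixidor genericity technique used in \cite{m3,te}, and once it is secured the remainder of the proof is bookkeeping.
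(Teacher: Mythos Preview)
Your proposal is correct and follows essentially the same route as the paper. The paper simply cites \cite[Theorem~1.1]{hhn} for non-emptiness (which is precisely the Mercat-type elementary transformation argument you sketch, starting from $L^{\oplus n_1}$ with $L\in B(1,d_0,k_0)(E_2)$), and derives the BN-number inequality from the equivalence $\beta(n_1,n_1d_0,n_1k_0)(E_2)\ge1\Leftrightarrow\beta(1,d_0,k_0)(E_2)\ge1$ together with monotonicity of $\beta$ in $d_1$ and $k$; your case analysis on the sign of $k-\chi$ makes explicit the monotonicity step that the paper states without comment, and your treatment of (ii) via Serre duality is identical to the paper's.
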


\begin{proof} (i) If $k\le0$, then $B(n_1,d_1,k)(E_2)=M_1$ and the result holda. Otherwise, the non-emptiness of $B(n_1,d_1,k)(E_2)$ (resp., $\widetilde{B}(n_1,d_1,k)(E_2)$) is  \cite[Theorem 1.1]{hhn}. The inequality $\beta(n_1,d_1,k)(E_2)>(\ge)1$ follows from the observation that
\begin{equation}\label{eq13}
\beta(n_1,n_1d_0,n_1k_0)(E_2)\ge1 \Longleftrightarrow \beta(1,d_0,k_0)(E_2)\ge1
\end{equation}
and the fact that $\beta(n_1,d_1,k)(E_2)$ is a strictly increasing function of $d_1$ and a decreasing function of $k$.

(ii) Under the stated hypotheses, in the case $-d_1\ge n_1d_0+1$, (i) implies that $B(n_1,-d_1,k_1)(E_2)\ne\emptyset$. By Serre duality, 
\[B(n_1,d_1,k)(K_C\otimes E_2^*)\cong B(n_1,-d_1,k_1)(E_2).\]
So $B(n_1,d_1,k)(K_C\otimes E_2^*)\ne\emptyset$. Moreover, as in the proof of (i), 
\begin{equation}\label{eq14}
\beta(n_1,-d_1,k_1)(E_2)>1
\end{equation} and
\[\beta(n_1,d_1,k)(K_C\otimes E_2^*)=n_1^2(g-1)+1-kk_1=\beta(n_1,-d_1,k_1)(E_2).\]

The same proof applies in the case $-d_1\ge n_1d_0$ if we replace  $>$ in \eqref{eq14} by $\ge$ and $B$ with $\widetilde{B}$ throughout.
\end{proof}

\begin{cor}\label{c7}
Under the hypotheses of Theorem \ref{t1}, the twisted BN locus $B^k(\cU_!,\cU_2)\ (\text{resp.}, \widetilde{B}(\cU_1,\cU_2))$ is non-empty. Moreover
\[\beta^k(\cU_1,\cU_2)>n_2^2(g-1)+2\  >0\ (\mbox{resp.,}\ge n_2^2(g-1)+2>0).\]
\end{cor}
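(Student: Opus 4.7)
The plan is to combine Theorem \ref{t1} with Remark \ref{r16} for the non-emptiness assertions, and to read off the BN-number inequality by directly comparing \eqref{eq5} with \eqref{eq2}. The corollary is essentially a bookkeeping consequence, so the main work is checking that the two twisted BN conditions match up correctly, especially on the semistable side.

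For non-emptiness I would first work under the hypotheses of Theorem \ref{t1}(i). Since $M_2$ is a non-empty open subset of $\widetilde{M}_2$, I pick any stable bundle $E_2\in M_2$. Theorem \ref{t1}(i) then gives $B(n_1,d_1,k)(E_2)\ne\emptyset$, and Remark \ref{r16} immediately produces a pair $(E_1,E_2)\in B^k(\cU_1,\cU_2)$. For the semistable locus, stability of $E_2$ gives $\gr E_2=E_2$, so any class in $\widetilde{B}(n_1,d_1,k)(E_2)$ furnished by Theorem \ref{t1}(i) satisfies $h^0(\gr E_1\otimes\gr E_2)=h^0(\gr E_1\otimes E_2)\ge k$ and yields a point of $\widetilde{B}(\cU_1,\cU_2)$. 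Under the hypotheses of Theorem \ref{t1}(ii), the argument is identical after replacing $E_2$ by any stable bundle in $M(n_2,n_2(2g-2)-d_2)$, so that its Serre dual $K_C\otimes E_2^*$ lies in $M_2$.

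For the BN-number bound, I would compare \eqref{eq5} and \eqref{eq2} directly. Since $\dim M_i=n_i^2(g-1)+1$ for $i=1,2$ and $\chi$ has the same definition in both formulas, one obtains the identity
\[\beta^k(\cU_1,\cU_2)=\beta(n_1,d_1,k)(E_2)+n_2^2(g-1)+1.\]
Theorem \ref{t1}(i) supplies $\beta(n_1,d_1,k)(E_2)>1$ in the stable case and $\ge 1$ in the semistable case; substituting into the identity yields $\beta^k(\cU_1,\cU_2)>n_2^2(g-1)+2$ (resp., $\ge n_2^2(g-1)+2$). The final strict positivity is immediate from $g\ge2$ and $n_2\ge 1$.

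There is no serious obstacle here: the only point requiring a moment's care is the semistable case, where one must pick $E_2$ stable so that $\gr E_2=E_2$ and the two formulations of the twisted BN condition agree. Once that choice is made, Theorem \ref{t1}, Remark \ref{r16} and the definitions do all the work.
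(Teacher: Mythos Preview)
Your proof is correct and follows essentially the same approach as the paper: choose $E_2$ stable, invoke Theorem \ref{t1} together with Remark \ref{r16} for non-emptiness, and derive the BN-number bound from the identity $\beta^k(\cU_1,\cU_2)=\dim M_2+\beta(n_1,d_1,k)(E_2)=n_2^2(g-1)+1+\beta(n_1,d_1,k)(E_2)$. You are simply more explicit about the semistable case and about how part (ii) of Theorem \ref{t1} feeds in, but the argument is the same.
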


\begin{proof}
In the theorem, we can take $E_2$ to be stable; the required non-emptiness then follows (see Remark \ref{r16}). The inequality for $\beta^k(\cU_1,\cU_2)$ follows from the fact that 
\[\beta^k(\cU_1,\cU_2)= \dim M_2+\beta(n_1,d_1,k)(E_2)\]
(see \eqref{eq5} and \eqref{eq2}).
\end{proof}

\begin{rem}\label{r3}\begin{em} (i)
As indicated in the introduction, the fact that the BN numbers are negative will be crucial for us in establishing that many of the examples constructed in this paper are new.

(ii) \eqref{eq13} makes Theorem \ref{t1}(i) the precise analogue of the theorem of Teixidor i Bigas \cite{te} for twisted BN loci $B(n_1,d_1,k)(E_2)$, at least on a general curve; the proof in \cite{hhn} depends on Mercat's construction \cite{m3} and holds for any curve. 

(iii) If we replace $E_2$ by $E_2\otimes L$, where $L$ is a line bundle of degree $\ell$, and simultaneously replace $d_0$ by $d_0-\ell$ and $d_1$ by $d_1-n_1\ell$ in Theorem \ref{t1}(i), the BN numbers and the isomorphism classes of the BN loci are unchanged. This applies also to Theorem \ref{t1}(ii) except that now $d_1$ must be replaced by $d_1+n_1\ell$.
\end{em}\end{rem}

\section{Examples with $k=k_1k_2$}\label{first}
In this section, we consider the following situation. Suppose that $B(n_i,d_i,k_i)\ne\emptyset$ for $i=1,2$ and that there exist $E_i\in B(n_i,d_i,k_i)$ such that $h^0(E_1\otimes E_2)\ge k_1k_2$. When $n_2=1$, this is exactly the situation considered in \cite{bmno}. Here, however, we assume $n_i\ge2$. One scenario to which this applies is when $E_1=D_E$.

\subsection{The construction}\label{ss41}
\begin{prop}\label{p3}
Let $C$ be a smooth curve of genus $g$. Suppose that $E$ is a generated stable (resp., semistable) bundle of rank $n$ and degree $d_1$ and that $h^0(E)=k_1=n+n_1$. Suppose further that $D_E$ is stable (resp., semistable) and that $E_2\in B(n_2,d_2,k_2)\ (\text{resp.}, [E_2]\in\widetilde{B}(n_2,d_2,k_2))$ with $\mu(E_2)<\mu(E)$. Then $h^0(D_E\otimes E_2)\ge k_1k_2\ (\text{resp.}, h^0(D_E\otimes\gr E_2)\ge k_1k_2)$. In particular, $B^k(\cU_1,\cU_2)\ne\emptyset\ (\text{resp.}, \widetilde{B}^k(\cU_1,\cU_2)\ne\emptyset)$.
\end{prop}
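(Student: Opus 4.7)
The plan is to take $E_1 := D_E$ as the candidate element of $M_1$ and extract sections of $D_E \otimes E_2$ directly from the dual span sequence \eqref{eq58}. Note that the rank of $D_E$ is $k_1 - n = n_1$ and its degree is $d_1$, so $D_E \in M_1$ (resp.\ $[D_E] \in \widetilde{M}_1$) under the stability (semistability) hypothesis.

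First I would tensor the dual span sequence
\[0 \lra E^* \lra H^0(E)^* \otimes \cO \lra D_E \lra 0\]
with $E_2$ (or with $\gr E_2$ in the semistable case) to obtain
\[0 \lra E^* \otimes E_2 \lra H^0(E)^* \otimes E_2 \lra D_E \otimes E_2 \lra 0,\]
and then take the associated long exact sequence in cohomology:
\[0 \lra H^0(E^* \otimes E_2) \lra H^0(E)^* \otimes H^0(E_2) \lra H^0(D_E \otimes E_2) \lra \cdots.\]

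The key step is to show $H^0(E^* \otimes E_2) = 0$. Since we are in characteristic zero, the tensor product of two semistable bundles on $C$ is semistable, so $E^* \otimes E_2$ is semistable of slope $\mu(E_2) - \mu(E) < 0$; any semistable bundle of negative slope has no sections. Combining this vanishing with the injection of $H^0(E)^* \otimes H^0(E_2)$ into $H^0(D_E \otimes E_2)$ yields
\[h^0(D_E \otimes E_2) \;\ge\; k_1 \cdot h^0(E_2) \;\ge\; k_1 k_2 = k,\]
which is the required inequality. For the semistable statement, the same exact sequence (tensored with $\gr E_2$, which is semistable of slope $\mu(E_2)$) gives $h^0(D_E \otimes \gr E_2) \ge k_1 k_2$; if one prefers the form $h^0(\gr D_E \otimes \gr E_2) \ge k_1 k_2$, one uses that passing to the associated graded can only increase $h^0$, since the Jordan--H\"older filtration of $D_E \otimes \gr E_2$ has $\gr D_E \otimes \gr E_2$ as its graded object.

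Finally, since $(D_E, E_2) \in M_1 \times M_2$ (resp.\ $([D_E],[E_2]) \in \widetilde{M}_1 \times \widetilde{M}_2$) and this pair realizes $h^0 \ge k$, the non-emptiness of $B^k(\cU_1,\cU_2)$ (resp.\ $\widetilde{B}^k(\cU_1,\cU_2)$) follows immediately from the definition (and Remark \ref{r16} if one prefers that viewpoint). There is no real obstacle in the argument; the only point requiring a moment's care is the vanishing of $H^0(E^* \otimes E_2)$, which rests entirely on the slope inequality $\mu(E_2) < \mu(E)$ together with the semistability of the tensor product.
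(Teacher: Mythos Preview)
Your proof is correct and follows essentially the same route as the paper: tensor the dual span sequence \eqref{eq58} by $E_2$ (or $\gr E_2$), use that $E^*\otimes E_2$ is semistable of negative slope to kill $H^0(E^*\otimes E_2)$, and read off $h^0(D_E\otimes E_2)\ge h^0(E)\cdot h^0(E_2)\ge k_1k_2$. Your extra remark about passing to the associated graded of $D_E$ is a nice touch for the semistable ``in particular'' clause, which the paper leaves implicit.
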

\begin{proof}
Consider the sequence \eqref{eq58} and suppose first that we are in the stable case. Note that $D_E\in B(n_1,d_1,k_1)$, since $D_E$ is stable by hypothesis and $h^0(E^*)=0$. Note also that $h^0(E^*\otimes E_2)=0$ since $E^*\otimes E_2$ is semistable of negative slope. Tensoring \eqref{eq58} by $E_2$ and taking global sections, we obtain
\[h^0(D_E\otimes E_2)\ge h^0(E)\cdot h^0(E_2)\ge k_1k_2.\]

The semistable case proceeds similarly.
\end{proof}

Note that, by \eqref{eq2}, we have
\begin{equation}\label{eq51}
\beta^k(\cU_1,\cU_2)=(n_1^2+n_2^2)(g-1)+2-k(k-\chi).
\end{equation}
We have also
\begin{equation}\label{eq52}
\beta(n_1n_2,n_2d_1+n_1d_2,k)=n_1^2n_2^2(g-1)+1-k(k-\chi).
\end{equation}

\begin{lem}\label{l10}Let $\mu_i$, $\lambda_i$ be positive rational numbers for $i=1,2$ and suppose that $d_i=n_i\mu_i$, $k_i=n_i\lambda_i$, $k=k_1k_2$. If
\begin{equation}\label{eq28}\mu_1+\mu_2<\lambda_1\lambda_2+g-1,
\end{equation}
then, for fixed $\mu_i$, $\lambda_i$ and sufficiently large $n_1$, $n_2$, $\beta^k(\cU_1,\cU_2)<0$.
\end{lem}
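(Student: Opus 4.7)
The plan is to substitute the given parametrizations directly into the formula \eqref{eq51} for $\beta^k(\cU_1,\cU_2)$ and identify the dominant term as $n_1,n_2\to\infty$.

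First I would compute $\chi$ under the substitution $d_i=n_i\mu_i$. From \eqref{eq24},
\[
\chi=n_2d_1+n_1d_2-n_1n_2(g-1)=n_1n_2\bigl(\mu_1+\mu_2-(g-1)\bigr).
\]
Next, since $k=k_1k_2=n_1n_2\lambda_1\lambda_2$, the deficiency is
\[
k-\chi=n_1n_2\bigl(\lambda_1\lambda_2-\mu_1-\mu_2+g-1\bigr),
\]
and the hypothesis \eqref{eq28} is exactly the assertion that the bracketed quantity $\lambda_1\lambda_2+g-1-\mu_1-\mu_2$ is strictly positive. Call this positive constant $\delta>0$; also set $\Lambda:=\lambda_1\lambda_2>0$. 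Both $\delta$ and $\Lambda$ depend only on the fixed data $\mu_i,\lambda_i$.

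Substituting into \eqref{eq51}, I obtain
\[
\beta^k(\cU_1,\cU_2)=(n_1^2+n_2^2)(g-1)+2-(n_1n_2)^2\Lambda\delta.
\]
Here the negative contribution grows like $(n_1n_2)^2$ while the positive contribution grows only like $n_1^2+n_2^2$. Hence, for fixed $\mu_i,\lambda_i$ (so that $\Lambda\delta>0$ is a fixed positive constant) and for $n_1,n_2$ both sufficiently large — restricted, of course, to values making $n_i\mu_i$ and $n_i\lambda_i$ integral, which is achieved along an arithmetic progression — the product $(n_1n_2)^2\Lambda\delta$ exceeds $(n_1^2+n_2^2)(g-1)+2$, so that $\beta^k(\cU_1,\cU_2)<0$.

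There is no genuine obstacle: the argument reduces to a one-line asymptotic comparison, the only point of care being that the hypothesis \eqref{eq28} is what guarantees $\delta>0$ and hence that the leading $(n_1n_2)^2$ term has the sign required to drive $\beta^k$ negative.
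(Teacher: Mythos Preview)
Your proof is correct and follows essentially the same approach as the paper: substitute $d_i=n_i\mu_i$, $k_i=n_i\lambda_i$, $k=k_1k_2$ into \eqref{eq51} and observe that the negative term grows like $(n_1n_2)^2$ while the positive terms grow only like $n_1^2+n_2^2$. The paper presents this by dividing through by $n_1^2n_2^2$ to exhibit the limiting constant $-\lambda_1\lambda_2(\lambda_1\lambda_2-(\mu_1+\mu_2)+g-1)$, but the content is identical to yours.
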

\begin{proof}
Since $k=k_1k_2$, \eqref{eq51} is equivalent to
\begin{equation}\label{eq62}
\frac{\beta^k(\cU_1,\cU_2)}{n_1^2n_2^2}=\left(\frac1{n_2^2}+\frac1{n_1^2}\right)(g-1)+\frac2{n_1^2n_2^2}-\lambda_1\lambda_2(\lambda_1\lambda_2-(\mu_1+\mu_2)+g-1).
\end{equation}
The result follows.
\end{proof}

\begin{theorem}\label{t6}
Suppose that $C$ is a smooth curve of genus $g\ge2$ and that $\mu_i$, $\lambda_i$ ($i=1,2$) are positive rational numbers, $\mu_1<2$ and $\mu_2\le2g$.
\begin{itemize}
\item[(i)] If $B(n_i,d_i,k_i)$ are non-empty BN loci with $n_i\ge2$ supported by $(\mu_i,\lambda_i)$ and $k=k_1k_2$, then $B^k(\cU_1,\cU_2)\ne\emptyset$.
\item[(ii)]  If $\widetilde{B}(n_i,d_i,k_i)$ are non-empty BN loci with $n_i\ge2$ supported by $(\mu_i,\lambda_i)$ and $k=k_1k_2$, then $\widetilde{B}^k(\cU_1,\cU_2)\ne\emptyset$ and
\[\widetilde{B}(n_1n_2,n_2d_1+n_1d_2,k)\ne\emptyset.\]
\end{itemize}
\end{theorem}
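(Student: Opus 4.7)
The plan is to combine Theorem \ref{t9}, which describes three structural forms (I, II, III) that an element of $B(n_1,d_1,k_1)$ must take when $d_1<2n_1$, with the section-counting estimate underlying Proposition \ref{p3}. Given the hypothesis $\mu_1 < 2$ (equivalently $d_1 < 2n_1$), I would fix $E_1 \in B(n_1,d_1,k_1)$ in one of these forms, pick any $E_2 \in B(n_2,d_2,k_2)$, and then show in each of the three cases that $h^0(E_1 \otimes E_2) \ge k_1 k_2 = k$. This produces a point of $B^k(\cU_1,\cU_2)$.

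Case I is immediate: tensoring the inclusion $\cO^{k_1} \hookrightarrow E_1$ by $E_2$ yields $h^0(E_1 \otimes E_2) \ge k_1 h^0(E_2) \ge k_1 k_2$. Case II is a direct application of Proposition \ref{p3}: writing $E_1 = D_E$ with $E$ of rank $\ell$ and degree $d_1 = n_1 + g\ell$, the constraint $d_1 < 2n_1$ forces $\mu(E) = g + n_1/\ell > 2g$, while the hypothesis $\mu_2 \le 2g$ provides the required slope inequality $\mu(E_2) < \mu(E)$.

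For Case III, the idea is to work one level up: the bundle $E'$ of rank $n_1 + \ell'$ appearing in the quotient sequence $0 \to \cO^{\ell'} \to E' \to E_1 \to 0$ is itself a dual span $E' = D_{E''}$, where $E'' := D_{E'}$ has rank $\ell$, degree $d_1$, and $h^0(E'') = n_1 + \ell + \ell'$. The numerical inequality $d_1 > 2g\ell$ (again from $d_1 < 2n_1$) gives $\mu(E'') > 2g$, so Theorem \ref{l1} applies and $E''$ is stable with $D_{E''} = E'$. Proposition \ref{p3} applied to $E''$ and $E_2$ then yields $h^0(E' \otimes E_2) \ge (n_1 + \ell + \ell')\, k_2$; tensoring the quotient sequence by $E_2$ and subtracting the contribution of $E_2^{\ell'}$ gives $h^0(E_1 \otimes E_2) \ge (n_1 + \ell)\, k_2 \ge k_1 k_2$.

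For part (ii), the same constructions work with semistable bundles and the semistable version of Proposition \ref{p3}, yielding $\widetilde{B}^k(\cU_1,\cU_2) \ne \emptyset$. The additional conclusion $\widetilde{B}(n_1 n_2, n_2 d_1 + n_1 d_2, k) \ne \emptyset$ follows because $E_1 \otimes E_2$ is semistable of the correct rank and degree, and the standard filtration inequality $h^0(\gr(E_1 \otimes E_2)) \ge h^0(E_1 \otimes E_2) \ge k$ places its S-equivalence class in the required locus. The main obstacle, I expect, is Case III: identifying the auxiliary bundle $E''$ and verifying the slope inequalities needed both for Butler's theorem and for the vanishing $h^0(E''^{*} \otimes E_2) = 0$; Cases I and II reduce to short-exact-sequence manipulations once Theorem \ref{t9} is invoked.
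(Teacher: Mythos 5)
Your overall strategy matches the paper's: invoke Theorem \ref{t9} to put a representative $E_1 \in B(n_1,d_1,k_1)$ into one of the forms I--III (using $\mu_1 < 2$), pick $E_2 \in B(n_2,d_2,k_2)$, and estimate $h^0(E_1\otimes E_2)$ case by case. Cases I and II agree with the paper's proof, and your slope verifications there are correct.

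The problem is in Case III, where you set $E'':=D_{E'}$ and assert that because $\mu(E'')>2g$, ``Theorem \ref{l1} applies and $E''$ is stable with $D_{E''}=E'$.'' This is circular: Theorem \ref{l1} is quantified over $E\in M(n,d)$, i.e.\ it \emph{assumes} stability of its input and only concludes that $D_E$ is stable; it does not say that a bundle of slope $>2g$ is stable, and it certainly does not directly assert that a kernel bundle $D_{E'}$ is stable. What you actually need is the surjectivity part of the isomorphism \eqref{eq21}: since $E'$ is stable of rank $n_1+\ell'$, degree $d_1$ and $h^0(E')=n_1+\ell+\ell'=d_1-\ell(g-1)$, it lies in $B(d_1-\ell g,\,d_1,\,d_1-\ell(g-1))$, so there is a unique $E''\in S\subset M(\ell,d_1)$ with $D_{E''}=E'$; one then identifies this $E''$ with $D_{E'}$ (a reflexivity statement for the dual-span construction, as in \cite{m1,m2}). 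A cleaner route, which is the one the paper takes, avoids producing $E''$ at all: tensor \eqref{eq56} by $E_2$ and use that $D_{E'}^*$ is semistable of slope $-d_1/\ell<-2g$, so $D_{E'}^*\otimes E_2$ is semistable of negative slope with no sections, giving $h^0(E'\otimes E_2)\ge h^0(E')\,h^0(E_2)$ directly; then tensor \eqref{eq57} by $E_2$ to conclude $h^0(E_1\otimes E_2)\ge (n_1+\ell)h^0(E_2)\ge k_1k_2$. Your arithmetic, including $\ell\ge k_1-n_1$ and the filtration inequality in part (ii), is otherwise fine.
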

\begin{proof} (i) Let $E_2\in B(n_2,d_2,k_2)$ with $d_2\le2n_2g$. We prove that there exists $E_1\in B(n_1,d_1,k_1)$ such that $h^0(E_1\otimes E_2)\ge k_1k_2$. The proof is the same as in \cite{bmno} with $L$ replaced by $E_2$. We need to consider cases I-III of Theorem \ref{t9}.

I. $\cO^{k_1}\otimes E_2$ is a subsheaf of $E_1\otimes E_2$. It follows at once that $k_1k_2\le h^0(E_1\otimes E_2$).

II. in \eqref{eq55}, $E_1\cong D_E$ and $\mu(E_2)\le2g<\mu(E)$. The result follows from Proposition \ref{p3}.

III. Tensoring \eqref{eq56} and \eqref{eq57} by $E_2$ and noting that $D_{E'}^*\otimes E_2$ is semistable of negative slope, we obtain $h^0(E'\otimes E_2)\ge h^0(E') h^0(E_2)$ and hence
\[h^0(E_1\otimes E_2)\ge ( h^0(E')-\ell')h^0(E_2)=(n_1+\ell)h^0(E_2)\ge k_1k_2.\]

(ii) This follows in the same way as (i), together with the fact  that the tensor product of semistable bundles is semistable.
\end{proof}

\begin{rem}\label{r11}\begin{em}
If $C$ is non-hyperelliptic, the theorem holds also under the hypotheses $\mu_1\le2$, $\mu_2<2g$ (see Case IV in Remark \ref{r23}).
\end{em}\end{rem}

\subsection{Examples}\label{ss42}

Using Lemmas \ref{l2} and \ref{l5}, it is easy to find many cases as in Theorem \ref{t6} where \eqref{eq28} is also satisfied, thus yielding examples of non-empty BN loci $B^k(\cU_1,\cU_2)$ with negative BN number. These cannot be obtained from Theorem \ref{t1}.

\begin{ex}\label{ex2}\begin{em}
Suppose that $\mu_i\le\frac12$ for $i=1,2$. From \cite{bgn} (see Lemma \ref{l5} and Figure 1), we can take $\lambda_i$ to be any positive rational numbers with $\lambda_i\le1-\frac1g(1-\mu_i)$. Then, Lemma \ref{l10} and Theorem  \ref{t6} apply, giving $B^k(\cU_1,\cU_2)\ne\emptyset$ and $\beta^k(\cU_1,\cU_2)<0$ for sufficiently large $n_1$, $n_2$, even when $g=2$. More generally, we can take $\mu_1\le1$ and $\mu_2\le g-2$ for $g\ge3$, or $\mu_1<2$ and $\mu_2\le g-3$ for $g\ge4$, with $\lambda_i$ any positive rational numbers for which there exist non-empty BN loci $B(n_i,d_i,k_i)$ supported by $(\mu_i,\lambda_i)$. 
\end{em}\end{ex}

It is even possible to find examples with $n_1=n_2=2$.

\begin{ex}\label{ex1}\begin{em}
Suppose that $n_1=n_2=2$. Then, under the hypotheses of Theorem \ref{t6}, $\beta^k(\cU_1,\cU_2)<0$ if and only if
\begin{equation}\label{eq61}
\left(\frac12-\lambda_1\lambda_2\right)(g-1)+\frac18-\lambda_1\lambda_2(\lambda_1\lambda_2-(\mu_1+\mu_2))<0
\end{equation}
(see \eqref{eq62}). We can take $B(n_i,d_i,k_i)=B(2,3,2)$ for $i=1,2$; this is non-empty by either Lemma \ref{l2} or Lemma \ref{l5}. \eqref{eq61} now becomes
\[-\frac12(g-1)+\frac18+2<0,\]
which is true for $g\ge6$. Here, $B(2,3,2)$ is the only candidate for $B(2,d_1,k_1)$ for applying Theorem \ref{t6}, but, for $i=2$, we could take any non-empty $B(2,d_2,k_2)$ instead of $B(2,3,2)$; once we have fixed $d_2$ with $d_2\le4g$ and $k_2\ge2$, \eqref{eq61} will hold for sufficiently large $g$. 
\end{em}\end{ex}

\subsection{A new region in the BN map}\label{ss43}
Substituting $k=k_1k_2$ in \eqref{eq52} and dividing by $n_1^2n_2^2$, we have
\begin{multline}\label{eq29}
\beta:=\frac{\beta(n_1n_2,n_2d_1+n_1d_2,k)}{n_1^2n_2^2}=\\
=g-1+\frac1{n_1^2n_2^2}-\lambda_1\lambda_2(\lambda_1\lambda_2-(\mu_1+\mu_2)+g-1).
\end{multline}
It is an interesting question as to whether $\beta$ can ever be negative under the hypotheses of Theorem \ref{t6} and the assumption that $\beta(n_2,d_2,k_2)\ge1$. One can see easily that $\beta>0$ if $\lambda_1\le1$, but other cases are not so straightforward. Independently of this, one can ask whether the theorem can give rise to new examples of non-empty BN loci in $M(n_1n_2,n_2d_1+n_1d_2,k_1k_2)$. Our next result shows that this is indeed the case.

We define a new region of the BN map as follows. Let
\[R:=\left\{(\mu_1,\mu_2,\lambda_1,\lambda_2)\in{\mathbb Q}^4\,\left|\begin{array}{c}\,0<\mu_1<2,0<\mu_2\le2g-2-\mu_1,\\\lambda_i\le\max\{ f_g(\mu_i),t_g(\mu_i)\}\end{array}\right.\right\}\]
and let $h:R\to {\mathbb Q}^2$ be defined by 
\[h(\mu_1,\mu_2,\lambda_1,\lambda_2)=(\mu_1+\mu_2,\lambda_1\lambda_2).\]

\begin{df}\label{d1}\begin{em}
The region BPN in the BN map is defined to be the union of $\im h$ and its Serre dual.
\end{em}\end{df}

\begin{theorem}\label{t8}
Let $C$ be a smooth  curve of genus $g\ge2$ and $n_i\ge2$. 
\begin{itemize}
\item[(i)] If $(\mu,\lambda)=h(\mu_1,\mu_2,\lambda_1,\lambda_2)\in\operatorname{BPN}$, then $(\mu_1+\mu_2,\lambda_1\lambda_2)$ supports a BN locus $\widetilde{B}(n_1n_2,n_2d_1+n_1d_2,k)$ with $k=k_1k_2$ for infinitely many values of $n_1$, $n_2$;
\item[(ii)] If $g\ge5$, $\operatorname{BPN} $ is not contained in $\operatorname{T\cup BMNO}$.
\end{itemize}
\end{theorem}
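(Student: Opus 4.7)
For part (i), the plan is to funnel everything through Theorem \ref{t6}(ii). Given $(\mu_1,\mu_2,\lambda_1,\lambda_2)\in R$, the inequality $\lambda_i\le\max\{f_g(\mu_i),t_g(\mu_i)\}$ places each $(\mu_i,\lambda_i)$ in $T\cup\operatorname{BMNO}$, so Lemmas \ref{l2} and \ref{l5} yield, for infinitely many integers $n_i\ge2$ with $n_i\mu_i,n_i\lambda_i\in\ZZ$, non-emptiness of $\widetilde B(n_i,d_i,k_i)$. The conditions $\mu_1<2$ and $\mu_2\le 2g-2-\mu_1<2g$ built into the definition of $R$ are exactly those required by Theorem \ref{t6}(ii), so for any such pair $(n_1,n_2)$ the locus $\widetilde B(n_1n_2,n_2d_1+n_1d_2,k_1k_2)$ is non-empty and in the BN map is supported at $h(\mu_1,\mu_2,\lambda_1,\lambda_2)$. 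The Serre-dual half of $\operatorname{BPN}$ is then handled by the isomorphism $\widetilde B(n,d,k)\cong\widetilde B(n,n(2g-2)-d,k-d+n(g-1))$ applied to the loci just produced.

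For part (ii), I plan to exhibit a single explicit point of $\operatorname{BPN}$ lying outside $T\cup\operatorname{BMNO}$. Take $\mu_1=\mu_2=3/2$ and $\lambda_1=\lambda_2=f_g(3/2)$. For $g\ge5$ one has $\hat{\eta}(2)=g+1-\lfloor g/2\rfloor\ge4$, so $3/2\in\,]1,\hat{\eta}(2)-1]$; the corresponding piece of \eqref{eq27} gives $f_g(3/2)=1+\tfrac1{2g}$, while the formula \eqref{eq65} gives $t_g(3/2)=1$. Hence the quadruple lies in $R$ and
\[
h(\mu_1,\mu_2,\lambda_1,\lambda_2)=\Bigl(3,\ 1+\tfrac1g+\tfrac1{4g^2}\Bigr)\in\operatorname{BPN}.
\]

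To show this point is not in $T\cup\operatorname{BMNO}$, I will evaluate $\max\{f_g,t_g\}$ at $\mu=3$. Since $\hat{\eta}(2)\ge4$, the interval $\,]1,\hat{\eta}(2)-1]$ still contains $3$, and the same piece of \eqref{eq27} gives $f_g(3)=1+1/g$; the formula \eqref{eq65} gives $t_g(3)=1$; and since $3<g-1$ the Serre-dual extension of $f_g$ does not intervene at this $\mu$ (and the Serre dual image of $(2g-5,\,t_g(2g-5))$ lands at $\lambda=1$ as well). Therefore $\max\{f_g,t_g\}(3)=1+1/g$, which is strictly less than $1+1/g+1/(4g^2)$, so the candidate point is indeed outside $T\cup\operatorname{BMNO}$.

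The main obstacle is the piecewise bookkeeping around the breakpoints of $f_g$ and $t_g$: the argument collapses at $g=4$, where $\hat{\eta}(2)=3$ and $\mu=3$ drops into the next piece $[\hat{\eta}(2),\hat{\eta}(2)+1]$, forcing $f_g(3)=2$ and defeating our candidate. Confirming that this jump is postponed beyond $\mu=3$ for every $g\ge5$, and that no Serre-dual image of a breakpoint of $f_g$ or $t_g$ reintroduces a comparable value at $\mu=3$, is precisely what pins down the hypothesis $g\ge5$.
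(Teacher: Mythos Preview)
Your argument is correct and follows the paper's approach. For (i), both you and the paper invoke Lemmas~\ref{l2} and~\ref{l5} to get non-empty $\widetilde B(n_i,d_i,k_i)$ and then apply Theorem~\ref{t6}(ii); the Serre-dual half is handled the same way. For (ii), the paper parametrises by $1<\mu_i<2$, $\lambda_i=1+\tfrac1g(\mu_i-1)$, and shows that for $g\ge7$ the resulting $(\mu_1+\mu_2,\lambda_1\lambda_2)$ lies above $f_g$ and $t_g$ throughout $2<\mu<4$, while merely asserting that ``our construction also gives new points for $g=5$ and $g=6$''; your choice $\mu_1=\mu_2=\tfrac32$ is the midpoint of this family, and by computing $f_g(3)=1+\tfrac1g$ directly (valid since $\hat\eta(2)\ge4$ for $g\ge5$) you obtain a clean uniform verification for all $g\ge5$, which is a modest improvement in explicitness over the paper's proof.
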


\begin{proof}
(i) By Lemmas \ref{l2} and \ref{l5}, $(\mu_i,\lambda_i)$ supports the BN locus $\widetilde{B}(n_i,d_i,k_i)$ ($i=1,2$) for infinitely many values of $n_i$. The result follows from Theorem \ref{t6}(ii).

(ii) Suppose that $2<\mu<4$. We can write $\mu=\mu_1+\mu_2$ with $1<\mu_i<2$ for $i=1,2$.  Now, if $\lambda_i=1+\frac1g(\mu_i-1)$, then, by \cite{m1} (see also Lemma \ref{l5}), $(\mu_i,\lambda_i$) is in the region BMNO and $(\mu_i,\lambda_i)$ supports $B(n_i,d_i,k_i)$ for any allowable $n_i$. So, by Theorem \ref{t6}(ii), $\widetilde{B}(n_1n_2,n_2d_1+n_1d_2,k)\ne\emptyset$ when $k=k_1k_2$. On the other hand,
\begin{equation}\label{eq53}
\lambda_1\lambda_2>1+\frac1g(\mu_1+\mu_2-2)
\ge1+\frac1g(\mu_1+\mu_2-\left\lceil\mu_1+\mu_2\right\rceil+1).
\end{equation}
It follows from Lemma \ref{l5} and \eqref{eq27} that $(\mu_1+\mu_2,\lambda_1\lambda_2)$ is not in the region BMNO if $\widehat{\eta}(2)\ge5$, in other words, if $g\ge7$. It is clear also that $(\mu_1+\mu_2,\lambda_1\lambda_2)$ is not in the region T, whose upper boundary is given by $\lambda=1$ in this range. In fact, our construction also gives new points for $g=5$ and $g=6$.
\end{proof}

\begin{rem}\label{r14}\begin{em}
 The construction in the proof of Theorem \ref{t8} shows that, for $g\ge7$, the top boundary of BPN in the range $2<\mu<4$ is given by $\lambda=1+\frac1g(\mu-2)+\frac1{g^2}\left(\frac{\mu-2}2\right)^2$, whereas the top boundary of BMNO is given by $\lambda=1+\frac1g(\mu-2)$ for $2<\mu<3$ and by $\lambda=1+\frac1g(\mu-3)$ for $3<\mu<4$. At the point $(2,1)$, the line $\lambda=1+\frac1g(\mu-2)$ is the tangent to the parabola $\lambda=1+\frac1g(\mu-2)+\frac1{g^2}\left(\frac{\mu-2}2\right)^2$. For $g=10$, BPN is illustrated in Figure 2, which extends to the range $4<\mu<5$. Figure 2 also shows an expanded version of the BN map close to $\mu=3$.
 \end{em}\end{rem}

\begin{center}
\includegraphics[width=12.5cm]{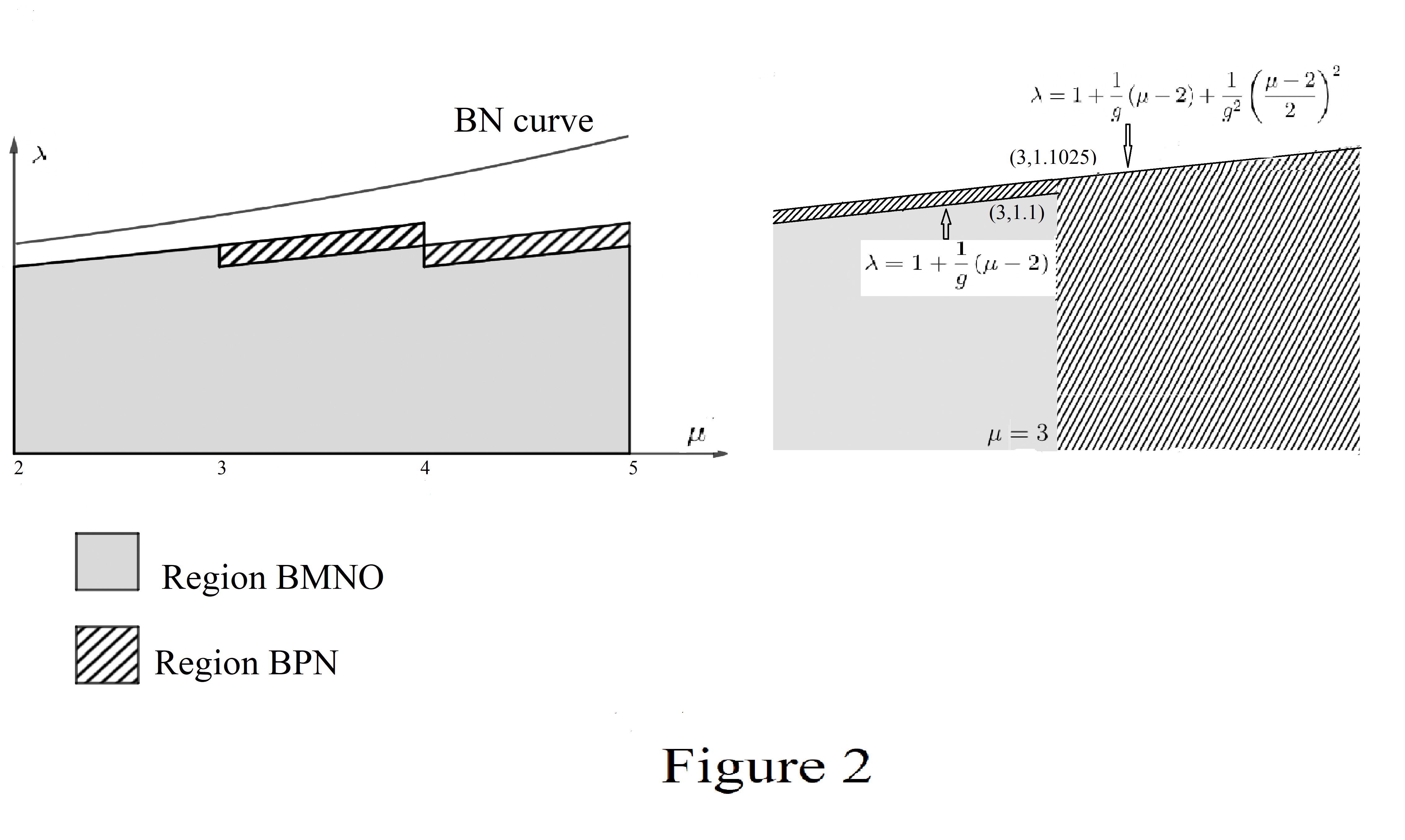}
\end{center}

\begin{rem}\begin{em}\label{r26}(i) BPN is a new region in the BN map (see Figure 2), whereas the papers \cite{ln2,ln5} give examples for isolated values of $\mu$, some of which extend to higher values of $g$. When $g=4$, on the other hand, there is an additional grey area of the BN map (see \cite[section 7]{ln1}), which contains the new part of the region BPN in the range $2<\mu<3$.

(ii) At least for Petri curves, some of the non-empty BN loci  
\[\widetilde{B}(n_1n_2,n_2d_1+n_1d_2,k)\subset \widetilde{M}(n_1n_2,n_2d_1+n_1d_2)\] 
constructed in Theorem \ref{t8} are new.  It is possible to choose the ranks and degrees so that $\gcd(n_1n_2,n_2d_1+n_1d_2)=1$; for example, if $g$ is odd, one can take
\[(n_1,d_1,k_1)=(g+1,2g+1,g+2),\ (n_2,d_2,k_2)=(g+2,2g+2,g+3).\]
Then $B(n_1n_2,n_2d_1+n_1d_2,k)\ne\emptyset$.

(iii) One can check that, in the framework of Theorem \ref{t8}, the examples of twisted BN loci with negative expected dimension constructed in \cite[Section 9]{hhn} give rise to points of T. It follows that our examples are different.
\end{em}\end{rem}

\section{Construction using kernel bundles}\label{kernel}
\subsection{The construction}\label{ss51}
The essential ingredient in proving Theorem \ref{t1} is Mercat's construction \cite{m3}, while Theorem \ref{t6} depends on the constructions of \cite{bgn, m1}. Another way of constructing non-empty twisted BN loci is described in \cite{br} and uses kernel bundles. The following proposition is contained in \cite[Remark 5.5]{br} and plays an essential r\^{o}le in the proof of \cite[Theorems 5.6 and 5.7]{br}. We include a proof for the convenience of the reader.
\begin{prop}\label{t2} 
Suppose that $E_1$ has rank $n_1$ and degree $d_1$ with $h^0(E_1)\ge k_1$ and that $E$ is as in \eqref{eq6}.
If $h^1(E_1\otimes E)=0$ and
\begin{equation}\label{eq12}
k\le (d-n(g-1))(k_1-n_1)-nd_1,
\end{equation}
then $h^0(E_1\otimes D_E^*)\ge k$.
\end{prop}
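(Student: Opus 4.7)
The plan is to tensor the defining sequence \eqref{eq6} of the kernel bundle with $E_1$, pass to the long exact cohomology sequence, and then estimate the dimensions using Riemann--Roch together with the hypothesis $h^1(E_1\otimes E)=0$.

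First I would tensor \eqref{eq6} by $E_1$, which remains exact since $E$ is locally free, producing
\[
0\lra E_1\otimes D_E^*\lra H^0(E)\otimes E_1\lra E_1\otimes E\lra 0.
\]
Taking global sections gives the left-exact sequence
\[
0\lra H^0(E_1\otimes D_E^*)\lra H^0(E)\otimes H^0(E_1)\lra H^0(E_1\otimes E),
\]
so
\[
h^0(E_1\otimes D_E^*)\ge h^0(E)\,h^0(E_1)-h^0(E_1\otimes E).
\]

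Next I would bound each of the three terms on the right. The hypothesis $h^0(E_1)\ge k_1$ is given. For $h^0(E)$, Riemann--Roch yields the unconditional bound $h^0(E)\ge \chi(E)=d-n(g-1)$. For $h^0(E_1\otimes E)$, the hypothesis $h^1(E_1\otimes E)=0$ lets us compute it exactly by Riemann--Roch:
\[
h^0(E_1\otimes E)=\chi(E_1\otimes E)=n_1d+nd_1-n_1n(g-1).
\]

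Substituting these estimates and simplifying,
\[
h^0(E_1\otimes D_E^*)\ge (d-n(g-1))k_1-\bigl(n_1d+nd_1-n_1n(g-1)\bigr)=(d-n(g-1))(k_1-n_1)-nd_1,
\]
and invoking the assumed inequality \eqref{eq12} gives $h^0(E_1\otimes D_E^*)\ge k$, as required.

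There is no real obstacle here: the argument is a standard kernel-bundle cohomology count. The only mildly delicate point is recognising that one does not need to assume $h^1(E)=0$—the inequality $h^0(E)\ge\chi(E)$ is automatic—and that the specific numerical form of \eqref{eq12} has been arranged precisely so that it matches $(d-n(g-1))k_1-\chi(E_1\otimes E)$ after the algebraic rearrangement above.
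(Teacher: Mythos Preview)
Your proof is correct and follows essentially the same approach as the paper: tensor the kernel sequence \eqref{eq6} by $E_1$, take global sections, use $h^0(E)\ge d-n(g-1)$, $h^0(E_1)\ge k_1$, and $h^0(E_1\otimes E)=\chi(E_1\otimes E)$ from the vanishing of $h^1$, then simplify. The paper's proof is more terse but the logic is identical.
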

\begin{proof} We have $h^0(E_1\otimes E)=nd_1+n_1d-nn_1(g-1)$ and $h^0(E)\ge d-n(g-1)$. So, tensoring \eqref{eq6} by $E_1$ and taking global sections,
\[h^0(E_1\otimes D_E^*)\ge (d-n(g-1))k_1-nd_1-n_1d+nn_1(g-1).\]
Simplifying and comparing with \eqref{eq12}, this gives the result.
\end{proof}

We have the following immediate corollary (see Remark \ref{r16}).
\begin{cor}\label{c1}
Suppose that the hypotheses of Proposition \ref{t2} are satisfied.
\begin{itemize}
\item[(i)] If $E_1$ is stable $(\text{resp., semistable})$, then  $B(n_1,d_1,k)(D_E^*)\ne\emptyset$ $(\text{resp.}, \widetilde{B}(n_1,d_1,k)(D_E^*)\ne\emptyset)$.
\item[(ii)] If $D_E^*$ is stable $(\text{resp., semistable})$, then $B(h^0(E)-n,-d,k)(E_1)\ne\emptyset\ (\text{resp.},\widetilde{B}(h^0(E)-n,-d,k)(E_1)\ne\emptyset)$.
\item[(iii)] If both $E_1$ and $D_E^*$ are stable $(\text{resp., semistable})$ and $(n_2,d_2)=(h^0(E)-n,-d)$, then  $B^k(\cU_1,\cU_2)\ne\emptyset\ (\text{resp.}, \widetilde{B}^k(\cU_1,\cU_2)\ne\emptyset)$.
\end{itemize}\end{cor}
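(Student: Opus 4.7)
The plan is that all three parts of the corollary follow directly from Proposition \ref{t2}, together with the definitions of the twisted BN loci and (for (iii)) Remark \ref{r16}; there is essentially no new content beyond bookkeeping. Under the stated hypotheses, Proposition \ref{t2} gives the key inequality $h^0(E_1\otimes D_E^*)\ge k$. From the defining sequence \eqref{eq6}, $D_E^*$ has rank $h^0(E)-n$ and degree $-d$, which identifies the pair $(n_2,d_2)$ appearing in (ii) and (iii).

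For (i), stability of $E_1$ places $E_1$ in $M_1=M(n_1,d_1)$, and the inequality $h^0(E_1\otimes D_E^*)\ge k$ is precisely the condition of \eqref{eq4} defining membership in $B(n_1,d_1,k)(D_E^*)$. For (ii), symmetry of the tensor product yields $h^0(D_E^*\otimes E_1)\ge k$, and stability of $D_E^*$ places it in $M(h^0(E)-n,-d)$, so that $D_E^*\in B(h^0(E)-n,-d,k)(E_1)$. For (iii), applying (i) with $(n_2,d_2)=(h^0(E)-n,-d)$ shows that $B(n_1,d_1,k)(E_2)\ne\emptyset$ for $E_2=D_E^*\in M_2$, and Remark \ref{r16} gives the equivalent conclusion $B^k(\cU_1,\cU_2)\ne\emptyset$. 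Equivalently, the pair $(E_1,D_E^*)\in M_1\times M_2$ is itself a witness point of $B^k(\cU_1,\cU_2)$.

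For the semistable versions, the only subtlety is that $\widetilde{B}$ is defined using the associated graded. I would invoke the standard inequality $h^0(\gr F\otimes G)\ge h^0(F\otimes G)$, proved inductively from a Jordan--H\"older filtration $0=F_0\subset F_1\subset\cdots\subset F_r=F$ by tensoring each subquotient sequence with $G$ and using left-exactness of $H^0$. Applied once this turns $h^0(E_1\otimes D_E^*)\ge k$ into $h^0(\gr E_1\otimes D_E^*)\ge k$, giving the semistable form of (i); applied with the roles of $E_1$ and $D_E^*$ reversed it yields (ii); and applied twice it yields $h^0(\gr E_1\otimes\gr D_E^*)\ge k$, placing $([E_1],[D_E^*])$ in $\widetilde{B}^k(\cU_1,\cU_2)$ and establishing (iii). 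The main (and only minor) obstacle is this passage to associated graded, which is entirely routine.
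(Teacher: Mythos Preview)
Your proposal is correct and matches the paper's approach: the paper simply states this as an ``immediate corollary (see Remark \ref{r16})'' with no further argument, so your expansion of (i)--(iii) via Proposition \ref{t2}, the definition \eqref{eq4}, and Remark \ref{r16} is exactly what is intended. Your treatment of the semistable case via the inequality $h^0(\gr F\otimes G)\ge h^0(F\otimes G)$ is a welcome elaboration that the paper omits; one minor remark is that the inequality uses the exactness of $0\to H^0(F_{i-1}\otimes G)\to H^0(F_i\otimes G)\to H^0(Q_i\otimes G)$ at the middle term (not just left-exactness), but this is of course what you mean.
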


Our object here is to use Corollary \ref{c1}(iii) to obtain new non-empty examples of twisted BN loci. We assume that $n_1\ge2$. In order for \eqref{eq12} to yield positive values of $k$ when $d>n(g-1)$, which will always be the case, we clearly require $k_1>n_1$. 

 The key point we require in order to apply Corollary \ref{c1}(iii) is that $D_E^*$ and $E_1$ are stable. We have already discussed the existence of stable $E_1$ in subsection \ref{ss22} and the stability of $D_E$ in subsection \ref{ss25}. 

We can now state and prove a general theorem, which includes Theorem \ref{t5}.

\begin{theorem}\label{t4} Suppose that  $C$ is a smooth curve of genus $g\ge 2$, $n_1\ge2$, $k_1>n_1$ and either $d>2ng$ or $d=2ng$ and $C$ is non-hyperelliptic. Suppose further that $B(n_1,d_1,k_1)\ne\emptyset$ and that \eqref{eq12} holds. Let  $(n_2,d_2)=(d-ng,-d)$ and suppose that $S$ is defined as in \eqref{eq43}. Then $S\ne\emptyset$ and the morphism
\begin{equation}\label{eq22}
B(n_1,d_1,k_1)\times S\lra M_1\times M_2:(E_1,E)\mapsto (E_1,D_E^*)
\end{equation}
is injective and has image contained in $B^k(\cU_1,\cU_2)$; in particular,
\[B^k(\cU_1,\cU_2)\ne\emptyset.\]
If, in addition, $k=d(k_1-n_1)-e$, where
\begin{equation}\label{eq70}
e\ge n(g-1)(k_1-n_1)+nd_1,
\end{equation}
 and
\begin{equation}\label{eq34}
 d_1<k_1+n_1(g-1)-\frac{g-1}{k_1-n_1},
\end{equation}
then,  for any fixed values of $n_1$, $d_1$, $k_1$, $n$ and $e$ satisfying \eqref{eq70}, and $d\gg0$,
\begin{equation}\label{eq25}\beta^k(\cU_1,\cU_2)<0.
\end{equation}
\end{theorem}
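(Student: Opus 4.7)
The plan is to split the theorem into two independent parts: first the geometric non-emptiness/injectivity statement, and then the asymptotic sign of $\beta^k(\mathcal{U}_1,\mathcal{U}_2)$ under the additional numerical hypotheses.

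For the first part I would verify the three hypotheses of Corollary \ref{c1}(iii) for pairs $(E_1,E)\in B(n_1,d_1,k_1)\times S$. The non-emptiness $S\ne\emptyset$ is immediate from Theorem \ref{l1} under the hypothesis ``$d>2ng$, or $d=2ng$ and $C$ non-hyperelliptic''; the same theorem guarantees that $D_E$, hence $D_E^*$, is stable, and a rank/degree count gives $\mathrm{rk}(D_E^*)=d-ng=n_2$, $\deg(D_E^*)=-d=d_2$, so $D_E^*\in M_2$. Since $E_1\in M_1$ is stable by definition, the only non-trivial hypothesis of Proposition \ref{t2} is $h^1(E_1\otimes E)=0$. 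For this I would use that the tensor product of two stable bundles on a smooth projective curve over $\mathbb{C}$ is semistable, combined with the slope estimate $\mu(E_1\otimes E)=\mu(E_1)+\mu(E)>2g-2$ (which holds since $B(n_1,d_1,k_1)\ne\emptyset$ with $k_1>n_1\ge 2$ forces $d_1\ge 1$, and $\mu(E)\ge 2g$). Proposition \ref{t2} applied with assumption \eqref{eq12} then yields $h^0(E_1\otimes D_E^*)\ge k$, so the image of \eqref{eq22} is contained in $B^k(\mathcal{U}_1,\mathcal{U}_2)$ and non-emptiness follows. Injectivity of \eqref{eq22} is the observation that the first coordinate recovers $E_1$ while $E$ is reconstructed from $D_E^*$ via dualization and the inverse of the isomorphism \eqref{eq21}.

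For the expected-dimension part I would regard $\beta^k(\mathcal{U}_1,\mathcal{U}_2)$ as a polynomial in $d$ with $n_1,d_1,k_1,n,e$ held fixed. Substituting $n_2=d-ng$, $d_2=-d$ into \eqref{eq24} gives $\chi=(d_1-n_1g)\,d+O(1)$, while $k=(k_1-n_1)\,d-e$ by hypothesis. A short expansion then yields
\begin{align*}
k-\chi &= \bigl(k_1+n_1(g-1)-d_1\bigr)\,d+O(1),\\
k(k-\chi) &= (k_1-n_1)\bigl(k_1+n_1(g-1)-d_1\bigr)\,d^2+O(d),
\end{align*}
and $\dim M_2=(g-1)d^2+O(d)$ while $\dim M_1=O(1)$. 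Plugging these into \eqref{eq2},
\begin{equation*}
\beta^k(\mathcal{U}_1,\mathcal{U}_2)=\bigl[(g-1)-(k_1-n_1)\bigl(k_1+n_1(g-1)-d_1\bigr)\bigr]\,d^2+O(d).
\end{equation*}
Since $k_1>n_1$, hypothesis \eqref{eq34} is exactly equivalent to $(k_1-n_1)(k_1+n_1(g-1)-d_1)>g-1$, i.e.\ to the leading coefficient above being strictly negative. Hence $\beta^k(\mathcal{U}_1,\mathcal{U}_2)<0$ for $d\gg 0$.

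The main obstacle is essentially bookkeeping: one must check that the reformulation of the hypothesis on $k$ as $k=d(k_1-n_1)-e$ with \eqref{eq70} is exactly \eqref{eq12}, which follows by rearrangement, and that the slope inequality $\mu(E_1\otimes E)>2g-2$ really holds so that Proposition \ref{t2} is applicable. Beyond these, the argument reduces to quoting Theorem \ref{l1}, Proposition \ref{t2}, Corollary \ref{c1}, together with a leading-order comparison of two polynomials in $d$.
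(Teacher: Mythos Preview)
Your proposal is correct and follows essentially the same route as the paper's proof: the first part is obtained by invoking Theorem \ref{l1}, Proposition \ref{t2} and Corollary \ref{c1}(iii), with the only substantive point being the vanishing $h^1(E_1\otimes E)=0$ via semistability and a slope bound; the second part is the same leading-coefficient computation of $\beta^k(\cU_1,\cU_2)$ as a quadratic in $d$, yielding $g-1-(k_1-n_1)(k_1+n_1(g-1)-d_1)$ and hence \eqref{eq25} under \eqref{eq34}. Your treatment is slightly more explicit on injectivity (via the isomorphism \eqref{eq21}) and on the equivalence of \eqref{eq12} with the parametrisation $k=d(k_1-n_1)-e$ under \eqref{eq70}, but these are precisely the details the paper leaves implicit.
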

\begin{proof}  The first part follows from  Theorem \ref{l1}, Proposition \ref{t2} and Corollary \ref{c1}(iii).
We need to note that $h^1(E_1\otimes E)=0$ since $E_1\otimes E$ is semistable of slope greater than $2g$.

For the second part, we have
\begin{eqnarray}\label{eq15}
\nonumber\beta^k(\cU_1,\cU_2)&=&n_1^2(g-1)+1+(d-ng)^2(g-1)+1\\&&-(d(k_1-n_1)-e)(d(k_1-n_1)-e-\chi),
\end{eqnarray}
where, by \eqref{eq24},
 \begin{eqnarray}\label{eq47}
\nonumber\chi&=&(d-ng)d_1-n_1d-n_1(d-ng)(g-1)\\&=&(d-ng)(d_1-n_1g)-nn_1g.
\end{eqnarray}
The right-hand side of \eqref{eq15} is a quadratic in $d$, with leading coefficient 
\begin{equation}\label{eq77}
g-1-(k_1-n_1)(k_1-n_1-d_1+n_1g).
\end{equation}
 So $\beta^k(\cU_1,\cU_2)<0$ for $d\gg0$ provided that \[d_1<k_1+n_1(g-1)-\frac{g-1}{k_1-n_1}.\]
\end{proof}

\begin{rem}\label{r4}\begin{em}
(i) Under the hypotheses of Theorem \ref{t4}, including the inequalities \eqref{eq70} and \eqref{eq34}, we deduce that the non-emptiness of $B^k(\cU_1,\cU_2)$ cannot be obtained from Theorem\  \ref{t1}. In fact, there are several choices we can make in attempting to apply Theorem \ref{t1} to this situation. We can take $E_2$ to be any of $E_1\otimes L$, $D_E^*\otimes L$, $K_C\otimes E_1^*\otimes L$ and $K_C\otimes D_E\otimes L$, where $L$ is any line bundle, with the appropriate choice of $(n_1,d_1)$ in each case. In all cases, we obtain $\beta^k(\cU_1,\cU_2)\ge1$. In the absence of \eqref{eq25}, we would have to work through all these cases separately.

(ii) If we assume only that $\widetilde{B}(n_1,d_1,k_1)\ne\emptyset$ and $d\ge2ng$ with $C$ any smooth curve of genus $g\ge2$, we obtain $\widetilde{B}(\cU_1,\cU_2)\ne\emptyset$. Also, if \eqref{eq34} holds and $d\gg0$, then the non-emptiness of $\widetilde{B}(\cU_1,\cU_2)$ cannot be obtained from  Theorem \ref{t1}.
\end{em}\end{rem}

\subsection{Examples}\label{ss52}
To show that Theorem \ref{t4} provides the examples we are seeking, we need to show that the conditions $B(n_1,d_1,k_1)\ne\emptyset$ and \eqref{eq34} are compatible. For this, we need more precise statements concerning the possible values of $d_1$.

\begin{cor}\label{c6}
Suppose that  $C$ is a smooth curve of genus $g\ge 3$, $n_1\ge2$, $k_1>n_1$ and $(n_2,d_2)=(d-ng,-d)$ with either $d>2ng$ or $d=2ng$ and $C$ non-hyperelliptic. Suppose further  that
\begin{equation}\label{eq63}
k_1+n_1(g-1)-n_1\left\lfloor\frac{g-1}{\lceil k_1/n_1\rceil}\right\rfloor\le d_1<k_1+n_1(g-1)-\frac{g-1}{k_1-n_1}
\end{equation}
and that $d_1$ is not divisible by $n_1$. Then, $B(n_1,d_1,k_1)\ne\emptyset$. Moreover, for
any fixed value of $e$ satisfying \eqref{eq70} and $d\gg0$, $B^k(\cU_1,\cU_2)\ne\emptyset$, but $\beta^k(\cU_1,\cU_2)<0$. In particular, $B^k(\cU_1,\cU_2)$ is of negative expected dimension and the non-emptiness of $B^k(\cU_1,\cU_2)$ cannot be obtained from Theorem \ref{t1}.
\end{cor}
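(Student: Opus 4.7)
The plan is to combine the non-emptiness criterion for $B(n_1,d_1,k_1)$ coming from Lemma \ref{l2} (with Remark \ref{r21}) together with a direct application of Theorem \ref{t4} and the sharp comparison recorded in Remark \ref{r4}(i). The proof essentially splits into checking that each of the three assertions of the corollary follows from one of these earlier results.

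First I would verify that $B(n_1,d_1,k_1)\ne\emptyset$. The left inequality of \eqref{eq63} is precisely the bound \eqref{eq40} of Remark \ref{r21}, so the point $(\mu,\lambda)=(d_1/n_1,k_1/n_1)$ lies in the Teixidor--Mercat region $\mathrm{T}$; by Lemma \ref{l2}, this already gives $\widetilde{B}(n_1,d_1,k_1)\ne\emptyset$. To upgrade to $B(n_1,d_1,k_1)\ne\emptyset$, I need to rule out the exceptional configuration \eqref{eq67}, equivalently the integer-slope case \eqref{eq59}. But \eqref{eq59} forces $n_1\mid d_1$, which is excluded by hypothesis, so $B(n_1,d_1,k_1)\ne\emptyset$ as required.

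Next, with $k=d(k_1-n_1)-e$ and $(n_2,d_2)=(d-ng,-d)$, a one-line rearrangement shows that the assumption \eqref{eq70} is equivalent to the bound \eqref{eq12} required by Theorem \ref{t4}; the remaining hypotheses on $n_1$, $k_1$, $d$ and $C$ are built into the corollary. Hence Theorem \ref{t4} applies and yields $B^k(\cU_1,\cU_2)\ne\emptyset$. Moreover, the right inequality of \eqref{eq63} is literally \eqref{eq34}, so the second half of Theorem \ref{t4} gives $\beta^k(\cU_1,\cU_2)<0$ once $n_1,d_1,k_1,n,e$ are fixed and $d$ is chosen sufficiently large. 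The final clause, that this non-emptiness cannot be recovered from Theorem \ref{t1}, is then immediate from Remark \ref{r4}(i): every instance of Theorem \ref{t1} produces a strictly positive Brill--Noether number, which is incompatible with $\beta^k(\cU_1,\cU_2)<0$.

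The main obstacle is essentially bookkeeping: making sure that the point $(d_1/n_1,k_1/n_1)$ genuinely supports $B$ and not merely $\widetilde{B}$, i.e.\ that it avoids the excluded loci of Lemma \ref{l2}. The hypothesis $n_1\nmid d_1$ is in place precisely to bypass this issue, after which the proof becomes a clean assembly of Lemma \ref{l2}, Theorem \ref{t4} and Remark \ref{r4}(i), with no further calculation required.
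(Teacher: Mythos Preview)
Your proposal is correct and follows essentially the same route as the paper: non-emptiness of $B(n_1,d_1,k_1)$ via Remark~\ref{r21} (with the hypothesis $n_1\nmid d_1$ eliminating the exceptional case \eqref{eq59}), then Theorem~\ref{t4} for both $B^k(\cU_1,\cU_2)\ne\emptyset$ and $\beta^k(\cU_1,\cU_2)<0$. The only cosmetic difference is that the paper cites Remark~\ref{r3}(i) rather than Remark~\ref{r4}(i) for the incompatibility with Theorem~\ref{t1}, but both references encode the same observation that Theorem~\ref{t1} forces a positive Brill--Noether number.
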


\begin{proof}The fact that $B(n_1,d_1,k_1)\ne\emptyset$ follows from Remark \ref{r21}. Theorem \ref{t4} then implies that $B^k(\cU_1,\cU_2)\ne\emptyset$ and that $\beta^k(\cU_1,\cU_2)<0$ when \eqref{eq70} holds and $d\gg0$. Hence, the non-emptiness cannot be obtained from Theorem \ref{t1} (see Remark \ref{r3}(i)).
\end{proof}

To obtain examples from this corollary, we need, in particular, to prove the existence of $d_1$ satisfying \eqref{eq63}. This is equivalent to showing that
\begin{equation}\label{eq68}
n_1\left\lfloor\frac{g-1}{\lceil k_1/n_1\rceil}\right\rfloor>\frac{g-1}{k_1-n_1}
\end{equation}
In fact \eqref{eq68} cannot hold if $g=2$ (which is why we have assumed $g\ge3$ in Corollary \ref{c6}) or, more generally, if $k_1\ge n_1(g-1)+1$, since, in that case, the left-hand side of \eqref{eq68} is $0$.
We therefore assume that
\begin{equation}\label{eq69}
n_1<k_1\le n_1(g-1)
\end{equation}

\begin{ex}\label{ex5}\begin{em}
Suppose that $g=3$ and \eqref{eq69} holds. Then \eqref{eq68} holds if and only $n_1(k_1-n_1)>2$, in other words, either $n_1\ge3$ or $n_1=2$ and $k_1=4$. In the latter case, the only possible value for $d_1$ is $d_1=6$. This is not permitted by the hypotheses of Corollary \ref{c6} and, in fact $B(2,6,4)=\emptyset$. For $n_1\ge3$, there do exist solutions of \eqref{eq63} for which $d_1$ is not divisible by $n_1$. In fact, if $k_1=n_1+2$, \eqref{eq63} gives $2n_1+2\le d_1<3n_1+1$, giving $n_1-2$ values of $d_1$ not divisible by $n_1$. If $n_1+3\le k_1\le2n_1$, there are $n_1-1$ permitted values of $d_1$ in the range $k_1+n_1\le d_1<k_1+2n_1$. A similar analysis can be carried out for $g\ge4$; in this case, in particular, $B(2,4g-5,2g-2)\ne\emptyset$ and \eqref{eq63} is satisfied, so examples exist for $n_1=2$, as well as for larger $n_1$.
\end{em}\end{ex}

\begin{rem}\label{r25}\begin{em}When $k_1=n_1+1$, there are additional cases in which the non-emptiness of $B(n_1,d_1,k_1)$ is known. This is related to the extended dual span construction, which involves the stability of bundles $D_{L,V}$, where $L$ is a line bundle and $V$ is a subspace of $H^0(L)$ which generates $L$. Applications of this will be considered in a paper currently in preparation \cite{bpn}.
\end{em}\end{rem}


\begin{thebibliography}{20}
\bibitem{bh} A. Bajravani and G. H. Hitching: 
\emph{Brill-Noether loci on moduli spaces of symplectic bundles over curves}.
Collectanea Math. 72 (2021), 443--469.
\bibitem{bf}
A.~Bertram and B.~Feinberg:
\emph{On stable rank two bundles with canonical determinant and many sections}.
In:  Algebraic Geometry (Catania, 1993/Barcelona 1994), 259--269, Lecture Notes in Pure and Appl. Math., Vol. 200,  Marcel Dekker, New York, 1998.
\bibitem{bbn1} U. N. Bhosle, L. Brambila-Paz and P. E. Newstead:
\emph{On coherent systems of type $(n,d,n+1)$ on Petri curves}.
Manuscr. math. 126 (2008), 409--441.
\bibitem{bbn2} U. N. Bhosle, L. Brambila-Paz and P. E. Newstead:
\emph{On linear systems and a conjecture of D. C. Butler}.
Internat. J. Math. 26 (2015), 1550007 (18 pages).
\bibitem{top3} S. B. Bradlow, O. Garc\'{\i}a-Prada, V. Mercat, V. Mu\~noz and P. E. Newstead: 
\emph{Moduli spaces of coherent systems of small slope on algebraic curves}. 
Comm. in Alg. 37 (2009), 2649--2678.
\bibitem{bgn} L. Brambila-Paz, I. Grzegorczyk and P. E. Newstead:
\emph{Geography of Brill-Noether loci for small slopes}.
J. Alg. Geom. 6 (1997), 645-669.
\bibitem{bmgno} L. Brambila-Paz, O. Mata-Gutierrez, P. E. Newstead and A. Ortega:
\emph{Generated coherent systems and a conjecture of D. C. Butler}.
Internat. J. Math. 30 (2019), 1950024 (25 pages), doi: 10.1142/S0129167X19500241. 
\bibitem{bmno} L. Brambila-Paz, V. Mercat, P. E. Newstead and F. Ongay:
\emph{Nonemptiness of Brill-Noether loci}.
Internat. J. Math. 11 (2000), 737--760.
\bibitem{bpn} L. Brambila-Paz and P. E. Newstead:
\emph{New examples of twisted Brill-Noether loci II}.
In preparation.
\bibitem{br} L. Brambila-Paz and R. R\'{\i}os Sierra:
\emph{Moduli of unstable bundles of HN-length two with fixed algebra of endomorphisms}.
arXiv:2203.08986, to appear in Contemporary Mathematics, volume on conference Moduli Spaces and Vector Bundles - New Trends.
\bibitem{bu1} D. C. Butler:
\emph{Normal generation of vector bundles over a curve}.
J. Diff. Geom. 39 (1994), 1-34.
\bibitem{bu2} D. C. Butler:
\emph{Birational maps of moduli of Brill-Noether pairs}.
arXiv:alg-geom/9705009.
\bibitem{gh} F. Ghione:
\emph{Un probl\`eme du type Brill-Noether pour les fibr\'es vectoriels.
In: Algebraic Geometry - Open Problems (Ravello, 1982), Lecture Notes in Mathematics Vol. 997, pp. 197--209, Springer, Berlin, 1983.}
\bibitem{g} D. Gieseker:
\emph{Stable curves and special divisors: Petri's Conjecture}.
Invent. Math. 66 (1982), 251--275.
\bibitem{hhn} G. H. Hitching, M. Hoff and P. E. Newstead: 
\emph{Nonemptiness and smoothness of twisted Brill-Noether loci}. 
Ann. Mat. Pura Applicata 200 (2021), 521--546, doi: 10.1007/s10231-020-01009-x (open access).
\bibitem{ln1}H. Lange and P. E. Newstead:
\emph{Higher rank BN-theory for curves of genus $4$}.
Comm. in Algebra 45 (2017), 3948--3966, doi: 10.1080/00927872.2016.1251938.
\bibitem{ln2}H. Lange and P. E. Newstead:
\emph{Higher rank BN-theory for curves of genus $5$}.
Rev. Mat. Complut. 29 (2016), 691--717.
\bibitem{ln5} H. Lange and P. E. Newstead:
\emph{Higher rank BN-theory for curves of genus $6$}. 
Internat. J. Math. 29 (2018), 1850014 (40 pages), doi: 10.1142/S0129167X18500143.
\bibitem{laz1} R. Lazarsfeld: 
\emph{Some applications of the theory of positive vector bundles}.
In: Complete intersections (Acireale, 1983), Lecture Notes in Math. Vol. 1092, 29--61, Springer, Berlin, 1984.
\bibitem{laz} R.~Lazarsfeld,
\emph{BN-Petri without degeneration}.
J. Differential Geom. 23 (1986), 299--307.
\bibitem{m1} V. Mercat: 
\emph{Le probl\`eme de Brill-Noether pour des fibr\'es stables de petite pente}.
J. reine angew. Math. 506 (1999), 1--41.
\bibitem{m3} V. Mercat: 
\emph{Le probl\`eme de Brill-Noether et le th\'eor\`eme de Teixidor}.
Manuscripta Math. 98 (1999), 75--85.
\bibitem{m2} V. Mercat: 
\emph{Fibr\'es stables de pente 2}.
Bull. London Math. Soc. 33 (2001), 535--542.
\bibitem{nr} M. S. Narasimhan and S. Ramanan:
\emph{Deformations of the moduli space of vector bundles over an algebraic curve}.
Ann. of Math. 101 (1975), 391--497.

\bibitem{te} M. Teixidor i Bigas:
\emph{Brill-Noether theory for stable vector bundles}.
Duke Math. J. 62 (1991), 385--400.



\end{thebibliography}
\end{document}